\documentclass[12pt,reqno]{amsart}

\usepackage[a4paper,left=30mm,right=30mm,top=30mm,bottom=30mm,marginpar=25mm]{geometry}

\usepackage{color}
\usepackage{comment}
\usepackage{amsmath}
\usepackage{amssymb}
\usepackage{amsthm}
\usepackage[latin1]{inputenc}
\usepackage{eurosym}
\usepackage[dvips]{graphics}
\usepackage{graphicx}
\usepackage{epsfig}
\usepackage{dsfont}
\usepackage[displaymath,mathlines]{lineno}
\usepackage{mathtools}
\usepackage[nocompress, space]{cite}

\allowdisplaybreaks

\usepackage{enumitem}

\usepackage{ifthen}

\makeindex

\newcommand{\Rr}{{\mathbb{R}}}

\newcommand{\Nn}{{\mathbb{N}}}

\newcommand{\Tt}{{\mathbb{T}}}

\newcommand{\epsi}{\varepsilon}

\def\leq{\leqslant}
\def\geq{\geqslant}

\numberwithin{equation}{section}

\newtheoremstyle{thmlemcorr}{10pt}{10pt}{\itshape}{}{\bfseries}{.}{10pt}{{\thmname{#1}\thmnumber{
#2}\thmnote{ (#3)}}}
\newtheoremstyle{thmlemcorr*}{10pt}{10pt}{\itshape}{}{\bfseries}{.}\newline{{\thmname{#1}\thmnumber{
#2}\thmnote{ (#3)}}}
\newtheoremstyle{defi}{10pt}{10pt}{\itshape}{}{\bfseries}{.}{10pt}{{\thmname{#1}\thmnumber{
#2}\thmnote{ (#3)}}}
\newtheoremstyle{remexample}{10pt}{10pt}{}{}{\bfseries}{.}{10pt}{{\thmname{#1}\thmnumber{
#2}\thmnote{ (#3)}}}
\newtheoremstyle{ass}{10pt}{10pt}{}{}{\bfseries}{.}{10pt}{{\thmname{#1}\thmnumber{
A#2}\thmnote{ (#3)}}}

\theoremstyle{thmlemcorr}
\newtheorem{theorem}{Theorem}
\numberwithin{theorem}{section}

\theoremstyle{thmlemcorr*}
\newtheorem{theorem*}{Theorem}
\newtheorem{lemma*}[theorem]{Lemma}
\newtheorem{corollary*}[theorem]{Corollary}
\newtheorem{proposition*}[theorem]{Proposition}
\newtheorem{problem*}[theorem]{Problem}
\newtheorem{conjecture*}[theorem]{Conjecture}

\theoremstyle{defi}
\newtheorem{definition}[theorem]{Definition}

\theoremstyle{remexample}
\newtheorem{remark}[theorem]{Remark}

\newtheorem{teo}[theorem]{Theorem}
\newtheorem{lem}[theorem]{Lemma}
\newtheorem{pro}[theorem]{Proposition}
\newtheorem{cor}[theorem]{Corollary}

\theoremstyle{ass}

\begin{document}

\title[Vanishing discount problem for infinite system]{Remarks on the vanishing discount problem for infinite systems of Hamilton-Jacobi-Bellman equations}

\author{Kengo Terai}
\address[K. Terai]{
        Graduate School of Mathematical Sciences, The University of Tokyo, 3-8-1 Komaba, Meguro-ku, Tokyo, 153-8914, Japan.}
\email{terai@ms.u-tokyo.ac.jp}

\thanks{
        This work was supported by Grant-in-Aid for JSPS Fellows Grant number 20J10824. 
}
\keywords{Hamilton-Jacobi-Bellman equations; Infinite systems; Viscosity solutions; Asymptotic analysis; Vanishing discount. }
\subjclass[2010]{
        35B40, 
        35F21, 
        49L25} 
\date{\today}

\begin{abstract}
This paper is concerned with the asymptotic analysis of infinite systems of weakly coupled stationary Hamilton-Jacobi-Bellman equations as the discount factor tends to zero. With a specific Hamiltonian, we show the convergence of the solution and  prove the solvability of the corresponding ergodic problem.
\end{abstract}

\maketitle
\section{introduction}
In this paper, we consider the following functional partial differential equation,
\begin{equation}\label{DP}
\alpha v^\alpha(x,\xi)+H(x,Dv^\alpha(x,\xi),\xi)+\int_Ik(\xi,\eta)(v^\alpha(x,\xi)-v^\alpha(x,\eta))\mbox{ }d\eta=0 \quad\rm{in}\  \Tt^d\times I,
\end{equation}
where, $I \subset \Rr$ is a given finite interval, $\Tt^d$ is the $d$-dimensional flat torus, and  $\alpha$ is a given positive number. Hamiltonian $H: \Tt^d \times \Rr^d \times I \to \Rr$ is a given function.
 A function $k:I \times I \to \Rr$ is positive and bounded, in particular, we assume that there exist positive constants $k_0<k_1$ such that 
\begin{equation*}
k_0\leq k(\xi,\eta)\leq k_1 \quad  (\xi,\eta \in I).
\end{equation*}
For simplicity, we assume that $|I|=1$, where $|\cdot|$ denotes the Lebesgue measure. In \eqref{DP}, $v^\alpha: \Tt^d \times I \to \Rr$ is an unknown function.

Here, we interpret $\xi \in I$ as the component of PDE systems.
Then, the functional partial differential equation \eqref{DP} can be regarded as an uncountable infinite system of discounted Hamilton-Jacobi-Bellman (HJB) equations. 
Weakly coupled HJB equations arise, for example, in the literature of optimal control problems with random switching costs, which are governed by specific Markov chains with state space $I$; see \cite{FS} and \cite{YZ}, for instance.
When $I=\{1,2,\cdots,m\}$ for a fixed $m\in \Nn$, the well-posedness of viscosity solutions was established in \cite{Enger} and \cite{Koike}.
When $I \subset \Rr$ is a given finite interval, in \cite{Ishiishimano}, they worked with viscosity solutions and addressed the asymptotic behavior of a different type using the perturbed test function method. In \cite{Shimano}, the author studied simultaneous effects of homogenization and penalization for infinite systems.   


Main interest of this paper is the {\it vanishing discount problem}; that is, how $v^\alpha$, the solution to \eqref{DP}, behaves as the {\it discount factor} $\alpha$ tends to zero. In view of the results of single equations or finite systems, we expect that the limit problem associated with \eqref{DP} is
\begin{equation}\label{EP}
H(x,Dv(x,\xi),\xi)+\int_Ik(\xi,\eta)(v(x,\xi)-v(x,\eta))\mbox{ }d\eta=c \quad\rm{in}\  \Tt^d\times I .
\end{equation}
The unknown is  a pair of $v:\Tt^d \times I \to \Rr$ and $c\in \Rr$, whose existence is not well-established in a general setting. Finally, via this discount approximation, we address the solvability of \eqref{EP}.


Here, we review some results of the vanishing discount problems briefly. 
In the case of single equations, for coercive Hamiltonian, the results in \cite{LPV} showed the existence of viscosity solutions to the ergodic problem 
\begin{equation}\label{EE}
H(x,Dv)=c \quad\rm{in}\  \Tt^d,
\end{equation}
which plays an important role in many contexts of PDE, for example, when we investigate the large time behavior or homogenization of Hamilton-Jacobi equation.
To prove the existence, it is natural to consider the following discount approximation,
\begin{equation}\label{DD}
\alpha v^\alpha+H(x,Dv^\alpha)=0 \quad\rm{in}\  \Tt^d.
\end{equation}
Because of the term $\alpha v^\alpha$, \eqref{DD} has the unique viscosity solution $v^\alpha$. Then,  $\alpha v^\alpha$ is uniformly bounded and $v^\alpha$ is equi-Lipschitz with respect to $\alpha >0$. Thus, through subsequences, $v^\alpha -\min_{\Tt^d} v^\alpha $ and $\alpha v^\alpha$ uniformly converges to some $v\in C(\Tt^d)$ and $-c\in \Rr$, respectively. Moreover, by the stability of viscosity solutions, $(v,c)$ solves the ergodic problem \eqref{EE}.
On the other hand, due to nonuniqueness of viscosity solutions to \eqref{EE}, it was not clear whether the convergence of $v^\alpha -\min_{\Tt^d} v^\alpha $ holds for the whole sequence or not.
From a view point of weak KAM theory, several researchers have investigated this asymptotic behavior. Firstly, the convergence of the whole sequence was established in \cite{davifathi}. The results in \cite{MT5} and \cite{IMT} proved the convergence for a degenerated second-order equation. In the case of finite systems, in \cite{davizavi} and \cite{Ishii2,Ishii3}, they showed  convergence results. On the other hand, in \cite{Ziliotto}, there is an example with non-convex Hamiltonian for which the convergence does not hold for the whole sequence. In \cite{Ishii4}, using the idea of \cite{Ziliotto}, the author showed a counterexample of the convergence for a system.  

 In the case of uncountable infinite systems, it is worth emphasizing that we need to take into account possibly the lack of the equi-continuity of $v^\alpha$, particularly with respect to the parameter $\xi$. Thus, in general, we do not know whether $v^\alpha -\min_{\Tt^d} v^\alpha $ converges on $\Tt^d \times I$ even along subsequences (see Remark \ref{rem}). 
This makes the problem much harder, and therefore we start with a specific Hamiltonian as below in \eqref{form}.
 This kind of Hamiltonian was studied in \cite{Namah} for single equations in the context of the large time behavior of Hamilton-Jacobi equations. Later, in \cite{cami2} and \cite{MT2}, the authors worked with this Hamiltonian when they investigated the large time behavior for finite systems. In this paper, we use their ideas to study the vanishing discount problem for infinite systems.

Here, we recall the definition of viscosity solutions for infinite systems  (see \cite{Ishiishimano} and \cite{Shimano}, for instance).
\begin{definition}
We denote by $\mathcal{U}^+(\Tt^d \times I)$ the set of functions $u$ on $\Tt^d \times I$ such that for all $x\in \Tt^d$ the function $u(x,\cdot)$ is Borel measurable and integrable in $I$ with respect to the given kernel $k$ and for each $\xi \in I$ the function $u(\cdot,\xi)$ is upper semi-continuous in $\Tt^d$. We set $\mathcal{U}^-(\Tt^d \times I):=-\mathcal{U}^+(\Tt^d\times I)$.
\begin{enumerate}
\item $u \in \mathcal{U}^+(\Tt^d \times I)$ is a viscosity subsolution to \eqref{DP} if whenever $\phi \in C^1(\Tt^d)$, $\xi \in I$ and $u(\cdot,\xi)-\phi$ attains its local maximum at $\hat x$, then
\begin{equation*}
\alpha u(\hat x,\xi)+H(\hat x,D \phi(\hat x),\xi)+\int_Ik(\xi,\eta)(u(\hat x,\xi)-u(\hat x,\eta))\mbox{ }d\eta \leq 0.
\end{equation*} 

\item $u\in \mathcal{U}^-(\Tt^d \times I)$ is a viscosity supersolution to \eqref{DP} if whenever $\phi \in C^1(\Tt^d)$, $\xi \in I$ and $u(\cdot,\xi)-\phi$ attains its local minimum at $\hat x$, then
\begin{equation*}
\alpha u(\hat x,\xi)+H(\hat x,D \phi(\hat x),\xi)+\int_Ik(\xi,\eta)(u(\hat x,\xi)-u(\hat x,\eta))\mbox{ }d\eta \geq 0.
\end{equation*} 

\item $u\in \mathcal{U}^+(\Tt^d \times I) \cap \: \mathcal{U}^-(\Tt^d \times I)$ is a viscosity solution to \eqref{DP} if $u$ is both a viscosity subsolution and a viscosity supersolution to \eqref{DP}.
\end{enumerate}
\end{definition}

Let $\mathcal{B}(I)$ be the space of all Borel functions on $I$.
Let $C(\Tt^d) \otimes \mathcal{B}(I)$ denote the set of functions  $g$ on $\Tt^d \times I$ such that for each $x\in \Tt^d$ the function $g(x,\cdot)$ is Borel measurable in $I$ and for each $\xi \in I$ the function $g(\cdot,\xi)$ is continuous on $\Tt^d$.

In \cite{Ishiishimano}, the authors proved a comparison principle and Perron's method for a infinite system. By a similar argument, we can prove that there exists  $v^\alpha \in C(\Tt^d) \otimes \mathcal{B}(I)$, which is the unique viscosity solution to \eqref{DP}. 

Before stating the main result, we introduce some assumptions. When we regard \eqref{DP} as an infinite system, it is natural that we do not assume that Hamiltonian $H$ and the kernel $k$ is continuous with respect to the parameter $\xi \in I$ and $(\xi,\eta)\in I\times I$, respectively.
Here, we consider the Hamiltonian $H\in C(\Tt^d\times \Rr^d)\otimes \mathcal{B}(I)$ of the form 
\begin{equation}\label{form}
H(x,p,\xi)=F(x,p,\xi)-f(x,\xi),
\end{equation}
where $F\in C(\Tt^d \times\Rr^d) \otimes \mathcal{B}(I)$ and $f\in C(\Tt^d) \otimes \mathcal{B}(I)$. We assume that $f$ is bounded on $\Tt^d \times I$, and that $F(\cdot,p,\cdot)$ is bounded on $\Tt^d\times I$ for each $p\in \Rr^d$. We work under the following assumptions.

\begin{enumerate}[label=(A\arabic*)]
\item There exist constants $C_1, C_2 >0$ and $m>1$ such that for all $(x,p,\xi)\in \Tt^d \times \Rr^d \times I$,
\[C_1 |p|^m -C_2 \leq F(x,p,\xi).\]
\label{Coercive}
\item $f$ is nonnegative and
\begin{equation}\label{A}
\mathcal A:=\{ x\in \Tt^d \:\:|\:\: f(x,\xi)=0\quad \mbox{for all } \xi \in I\} \neq \emptyset.
\end{equation}
\label{NR}
\item For each $(x,\xi)\in \Tt^d \times I$, $p \mapsto F(x,p,\xi)$ is convex.
\label{convex}
\item 
$F$ is nonnegative and $F(x,0,\xi)=0$, for all $(x,\xi) \in \Tt^d \times I$.
\label{positive}
\item 
There exists a modulus of continuity $\omega$ such that
\[|f(x,\xi)-f(y,\xi)|\leq \omega(|x-y|),\]
for all $x,y \in \Tt^d$ and $\xi \in I$.\\
For each $R>0$, there exists  a modulus of continuity $\omega_R$ such that
\[ |F(x,p,\xi)-F(y,q,\xi)|\leq \omega_R(|x-y|+|p-q|), \]
for all $x,y \in \Tt^d$, $p,q\in B(0,R)$ and $\xi \in I$, where $B(0,R)$ is the closed ball with radius $R$ centered at the origin.
\label{growth1}
\end{enumerate}
\setcounter{theorem}{0}

Here is our main result of this paper.
\begin{teo}\label{result1}
Suppose that Assumptions \ref{Coercive}-\ref{growth1} hold. Let $v^\alpha \in C(\Tt^d)\otimes \mathcal{B}(I)$ be the viscosity solution to \eqref{DP}. Then, there exists $v\in C(\Tt^d)\otimes \mathcal{B}(I)$, which is a viscosity solution to \eqref{EP} with $c=0$, satisfying that for all $\xi \in I$, $v^\alpha(\cdot,\xi) \to v(\cdot,\xi)$ uniformly on $\Tt^d$ as $\alpha \to 0$.
\end{teo}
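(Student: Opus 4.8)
The plan is to produce bounds on $v^\alpha$ that are uniform as $\alpha\to0$, then to exploit the sign conditions \ref{NR} and \ref{positive} to make $\alpha\mapsto v^\alpha$ monotone, which upgrades subsequential convergence to convergence of the whole family. First I would note that $0$ is a subsolution of \eqref{DP}, since $H(x,0,\xi)=F(x,0,\xi)-f(x,\xi)=-f(x,\xi)\le0$ by \ref{positive} and \ref{NR} and the coupling term vanishes; comparison then gives $v^\alpha\ge0$. The crucial point is an upper bound that does not degenerate as $\alpha\to0$. Set $\underline F(x,p):=\inf_{\xi\in I}F(x,p,\xi)$ and $\overline f(x):=\sup_{\xi\in I}f(x,\xi)$; by \ref{growth1} these are continuous, and by \ref{Coercive}, \ref{positive} the scalar Hamiltonian $\underline F$ is coercive with $\underline F\ge0$ and $\underline F(x,0)=0$. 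I would solve the scalar ergodic problem $\underline F(x,DV)=\overline f(x)+c$ on $\Tt^d$ and claim $c=0$: the constant $w\equiv0$ is a subsolution at level $c=0$ because $\underline F(x,0)=0\le\overline f(x)$, so the ergodic constant is $\le0$; conversely a subsolution of $\underline F(x,Dw)\le\overline f-\delta$ would satisfy $0\le\underline F(x,Dw)\le\overline f(x)-\delta$ a.e., forcing $\overline f\ge\delta$ everywhere, which is impossible since $\overline f$ vanishes on $\mathcal{A}\ne\emptyset$ by \ref{NR}. Hence $c=0$, there is a bounded Lipschitz solution $V=V(x)$, normalised to $V\ge0$; since $V$ is independent of $\xi$ its coupling term is zero and $F(x,DV,\xi)\ge\underline F(x,DV)=\overline f(x)\ge f(x,\xi)$, so $V$ is a supersolution of \eqref{DP} for every $\alpha$. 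Comparison yields $0\le v^\alpha\le V$ uniformly in $\alpha$, and in particular $\alpha v^\alpha\to0$ uniformly.

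With the uniform bound in hand, the coercivity \ref{Coercive} gives an equi-Lipschitz estimate in $x$: testing the subsolution property at an interior maximum of $v^\alpha(\cdot,\xi)-\phi$ and using $0\le v^\alpha\le\|V\|_\infty$ to bound the coupling integral controls $F(\hat x,D\phi(\hat x),\xi)$ by a constant independent of $\alpha$ and $\xi$, whence $|D\phi(\hat x)|\le L$ for a universal $L$. Next I would show $\alpha\mapsto v^\alpha$ is nonincreasing: for $0<\alpha_1\le\alpha_2$, inserting $v^{\alpha_2}$ into the $\alpha_1$-operator gives $\alpha_1 v^{\alpha_2}+H(x,Dv^{\alpha_2},\xi)+(\text{coupling})=(\alpha_1-\alpha_2)v^{\alpha_2}\le0$ because $v^{\alpha_2}\ge0$, so $v^{\alpha_2}$ is a subsolution of the $\alpha_1$-problem and comparison gives $v^{\alpha_2}\le v^{\alpha_1}$. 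Consequently $v^\alpha$ increases pointwise to $v:=\sup_{\alpha>0}v^\alpha\le V$ as $\alpha\downarrow0$.

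For the passage to the limit, fix $\xi$: the functions $v^\alpha(\cdot,\xi)$ are $L$-Lipschitz and increase pointwise to $v(\cdot,\xi)$, which is therefore $L$-Lipschitz, so by Dini's theorem the convergence is uniform on $\Tt^d$. The limit $v$ is continuous in $x$ and, being a monotone pointwise limit of functions in $C(\Tt^d)\otimes\mathcal{B}(I)$, Borel in $\xi$. Finally I would pass to the limit in \eqref{DP}: the nonlocal term converges, $\int_I k(\xi,\eta)v^\alpha(x,\eta)\,d\eta\to\int_I k(\xi,\eta)v(x,\eta)\,d\eta$, by dominated convergence (domination by $k_1\|V\|_\infty$ and pointwise convergence in $\eta$), while $\alpha v^\alpha\to0$; the stability of viscosity solutions then shows that $v$ solves \eqref{EP} with $c=0$.

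I expect the decisive difficulty to be the uniform-in-$\alpha$ upper bound. The nonlocal coupling can transport the $O(1/\alpha)$ growth of $v^\alpha$ between components, so the coercive gradient estimate alone does not close, and the usual normalisation $v^\alpha-\min v^\alpha$ discards exactly the information needed to pin $c=0$. The remedy is the $\xi$-independent supersolution $V$, whose construction reduces to identifying the ergodic constant of the auxiliary scalar problem as exactly $0$; this is precisely where the structural hypotheses \ref{Coercive}, \ref{NR}, \ref{positive} enter. The monotonicity argument then does the comparatively light work of promoting convergence from subsequences to the whole family, circumventing the lack of equi-continuity in $\xi$ highlighted in the introduction.
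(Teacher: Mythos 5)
Your proposal is correct, but it takes a genuinely different route from the paper's proof. The paper never builds a $\xi$-independent barrier: instead it proves (Proposition \ref{zero}) that $v^\alpha$ vanishes \emph{identically} on the set $\mathcal{Z}$ of \eqref{uniquenessset} --- an argument that needs the convexity hypothesis \ref{convex} (mollification plus Jensen) and the invariant density $\gamma$ of the coupling operator constructed in Section \ref{gamma} --- then takes half-relaxed limits $\overline v,\underline v$ (Proposition \ref{stability}), which are sub/supersolutions of \eqref{EP} with $c=0$ vanishing on $\mathcal{Z}$, and concludes $\overline v\le\underline v$ from the comparison principle with uniqueness set $\mathcal{Z}$ (Theorem \ref{SCR}). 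You replace all of this by: the scalar Namah--Roquejoffre-type supersolution $V$ solving $\inf_{\xi}F(x,DV,\xi)=\sup_{\xi}f(x,\xi)$ with ergodic constant $0$, giving $0\le v^\alpha\le V$ via Proposition \ref{new-comparison}; monotonicity of $\alpha\mapsto v^\alpha$ (valid because $v^\alpha\ge 0$); Dini's theorem for each fixed $\xi$; and stability, where your terse final step does close, since the equi-Lipschitz bound together with monotone convergence and Dini gives uniform-in-$x$ convergence of $x \mapsto \int_I k(\xi,\eta)v^\alpha(x,\eta)\,d\eta$ for each fixed $\xi$. What your route buys: it is more elementary (no Krein--Rutman input, no half-relaxed limits, no uniqueness-set comparison), and it never invokes \ref{convex}, so it in fact shows the convexity assumption is dispensable for Theorem \ref{result1}. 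What the paper's route buys: Proposition \ref{zero} and Theorem \ref{SCR} are stronger structural results of independent interest --- they identify the limit as the unique solution of \eqref{EP} with $c=0$ that vanishes on $\mathcal{Z}$ --- and the same machinery carries over to the refined asymptotics of the Appendix (uniform convergence on $\Tt^d\times I$ in the continuous setting), which your $\xi$-by-$\xi$ Dini argument does not immediately provide, although monotonicity plus joint continuity would recover it by applying Dini on $\Tt^d\times I$.
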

To show this convergence result, we use an idea from \cite{Namah}. In the case of both single equations and finite systems (see \cite{cami2} and \cite{MT2}), a comparison principle in terms of $\mathcal{A}$ (or $\mathcal{A}\times I$) plays an important role to study asymptotic problems. 
In our setting, because we study under the condition that  $F\in C(\Tt^d\times \Rr^d)\otimes \mathcal{B}(I)$, $k$ is measurable on $I\times I$, and in particular $f\in C(\Tt^d)\otimes \mathcal{B}(I)$,
we set
\begin{equation}\label{tildeA}
\mathcal{\tilde A}:=\{x\in \Tt^d \:\:|\:\: f(x,\xi)=0 \quad \mathrm{a.e.}\:\: \xi \in I\},
\end{equation}
and
\begin{equation}\label{uniquenessset}
\mathcal{Z}:=\{(x,\xi)\in \mathcal{\tilde A} \times I \:\:|\:\: f(x,\xi)=0 \}. 
\end{equation}
It is clear that $\mathcal{A} \subset \mathcal{\tilde A}$ and $\mathcal{A}\times I \subset \mathcal{Z}$. Note that  if $f$ is continuous on $\Tt^d \times I$, it holds  $\mathcal{A} = \mathcal{\tilde A}$ and  $\mathcal{A}\times I = \mathcal{Z}$. 
We emphasize that $\mathcal{Z}$ is a uniqueness set for \eqref{EP} as proved in Theorem \ref{SCR}.
Although $\mathcal{Z}$ is bigger than $\mathcal{A} \times I$, we have nice controls of the values of $v^\alpha$ on $\mathcal{Z}$ thanks to Proposition $4.1$. They enable us to show that the convergence of $v^\alpha$ holds for the whole sequence.

This paper is organized as follows. In Section \ref{gamma}, we review some basic properties of coupling term of weakly coupled systems. In Section \ref{discount}, we get estimates for the solution to \eqref{DP} and stability results of viscosity solutions.  Finally, in Section \ref{conv}, we give a comparison principle for ergodic problem \eqref{EP} and prove Theorem \ref{result1}.

\section{Preliminaries}
\label{gamma}

In this paper, we often use the following notation:
\[ \Theta v(x,\xi) := \int_Ik(\xi,\eta)(v(x,\xi)-v(x,\eta))\mbox{ }d\eta .\]
Here, we review some properties of coupling term $\theta$, which have been studied in \cite{Ishiishimano} and \cite{Shimano}. To keep our presentation self-contained, we summarize some of them.

Define the compact linear operator $ \Lambda: {L}^2 (I) \to {L}^2(I)$ by
\[  \Lambda f(\xi):= \int_I \frac{k(\xi,\eta)}{\overline k(\xi)}f(\eta)\mbox{ }d\eta, \]
where we define $\overline k(\xi)$ by
\[\overline k(\xi):=\int_I k(\xi,\eta)\mbox{ }d\eta. \]
Then, $\Lambda$ has $1 \in \Rr$ as its eigenvalue and $\bold{1}(\xi)\equiv1 \in L^2(I)$ as a corresponding eigenvector.
Then, the following Lemma holds. 

\begin{lem} Let $r( \Lambda)$ be the spectral radius of $\Lambda$. Then, $r(\Lambda)=1$.
\end{lem}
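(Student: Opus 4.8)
The plan is to use the compactness of $\Lambda$ to reduce the computation of $r(\Lambda)$ to a bound on the moduli of its eigenvalues, and then to exploit the fact that the kernel $K(\xi,\eta):=k(\xi,\eta)/\overline k(\xi)$ is a positive \emph{averaging} kernel, in the sense that $\int_I K(\xi,\eta)\,d\eta=1$ for every $\xi\in I$. The lower bound $r(\Lambda)\geq 1$ is immediate, since it has already been noted that $1$ is an eigenvalue of $\Lambda$ with eigenvector $\mathbf 1$. It remains to prove $r(\Lambda)\leq 1$.

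Since $\Lambda$ is compact, its spectrum consists of $0$ together with at most countably many eigenvalues, so $r(\Lambda)$ equals the supremum of $|\lambda|$ over the eigenvalues $\lambda$ of $\Lambda$. Let $\lambda\neq 0$ be such an eigenvalue and let $f\in L^2(I)$, $f\neq 0$, be a corresponding eigenfunction. First I would bootstrap the integrability of $f$: because $|I|=1$ and $k_0\leq k\leq k_1$, we have $\overline k(\xi)\in[k_0,k_1]$, so $0\leq K(\xi,\eta)\leq k_1/k_0$ is bounded; by the Cauchy--Schwarz inequality $|\Lambda f(\xi)|\leq (k_1/k_0)\|f\|_{L^2(I)}$ for every $\xi\in I$, whence $\Lambda f\in L^\infty(I)$. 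Consequently $f=\lambda^{-1}\Lambda f\in L^\infty(I)$ as well.

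Now I would invoke the averaging property on $L^\infty(I)$. For any $f\in L^\infty(I)$ one has the pointwise estimate
\[
|\Lambda f(\xi)| \leq \int_I K(\xi,\eta)\,|f(\eta)|\,d\eta \leq \|f\|_{L^\infty(I)}\int_I K(\xi,\eta)\,d\eta = \|f\|_{L^\infty(I)},
\]
so $\|\Lambda f\|_{L^\infty(I)}\leq \|f\|_{L^\infty(I)}$. Applying this to the eigenfunction gives $|\lambda|\,\|f\|_{L^\infty(I)}=\|\Lambda f\|_{L^\infty(I)}\leq\|f\|_{L^\infty(I)}$, and hence $|\lambda|\leq 1$. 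As this holds for every eigenvalue, $r(\Lambda)\leq 1$, and combining with the lower bound yields $r(\Lambda)=1$.

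I expect the only genuine subtlety to be the bootstrap step. The spectral radius is computed on $L^2(I)$, yet the clean contraction estimate $\|\Lambda f\|_{L^\infty}\leq\|f\|_{L^\infty}$ naturally lives on $L^\infty(I)$. One cannot simply bound $r(\Lambda)$ by the $L^2$-operator norm, since $\Lambda$ has a non-symmetric kernel and its $L^2\to L^2$ norm may exceed $1$. It is precisely the compactness of $\Lambda$ (which confines the nonzero spectrum to eigenvalues) together with the smoothing effect of the bounded kernel (which forces every eigenfunction into $L^\infty(I)$) that allows the maximum-type argument to apply and closes the estimate.
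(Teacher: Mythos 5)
Your proof is correct, and it is in fact leaner than the paper's. Both arguments rest on the same two properties of the normalized kernel $K(\xi,\eta)=k(\xi,\eta)/\overline k(\xi)$ -- boundedness, $k_0/k_1\leq K\leq k_1/k_0$, and the averaging property $\int_I K(\xi,\eta)\,d\eta=1$ -- and both use compactness (implicitly, in the paper's case) to reduce $r(\Lambda)$ to a bound on eigenvalue moduli; even your bootstrap has a counterpart in the paper, which replaces the $L^2$ eigenfunction $\hat\phi$ by the pointwise-defined, bounded representative $\phi=\mu^{-1}\Lambda\hat\phi$. The difference is in how the sup-norm information is then exploited. The paper argues by contradiction: assuming $|\mu|\geq 1$, it runs a Perron--Frobenius-style rigidity argument at near-maximum points (the sets $B_\delta$) to show that $|\phi|$ equals $\sup_I|\phi|$ almost everywhere, and only then concludes $|\mu|=1$. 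You instead apply the contraction $\|\Lambda f\|_{L^\infty(I)}\leq\|f\|_{L^\infty(I)}$ directly to the bootstrapped eigenfunction, obtaining $|\lambda|\leq 1$ for \emph{every} eigenvalue in one line, and recover $r(\Lambda)=1$ from the known eigenvalue $1$. Your route is shorter because the rigidity step is not actually needed for the inequality; what the paper's longer argument buys is the additional structural fact that eigenfunctions of peripheral eigenvalues have constant modulus, which is in the spirit of the Krein--Rutman/Perron--Frobenius framework invoked immediately after the lemma, but is not required for the statement itself. Your closing remark is also on target: since $K$ is not symmetric, the $L^2$ operator norm of $\Lambda$ need not be $\leq 1$, so some device -- your $L^\infty$ bootstrap of eigenfunctions, or the paper's pointwise representative -- is genuinely needed before a maximum-type estimate can be applied.
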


\begin{proof}
Let $\mu \in \mathbb{C}$ be an eigenvalue of $\Lambda$ with $|\mu|\geq1$ and take $\hat \phi \in L^2(I)$ as a corresponding eigenvector, that is, $\Lambda   \hat \phi=\mu  \hat \phi$ in the sense of $L^2(I)$. Define the function $\phi: I\to \Rr$ by
\[\phi(\xi)=\mu^{-1}\int_{I} \frac{k(\xi,\eta)}{\overline k(\xi)}\hat \phi(\eta) \mbox{ }d\eta. \]
Then, we obtain $\mu \phi(\xi)=\Lambda \phi(\xi)$ for all $\xi \in I$.

Set $M:=\sup_{I}|\phi|$. Here, we show that $|\phi(\xi)|=M$ for almost all $\xi \in I$. Take $\epsi>0$ and $\xi \in I$ satisfying $|\phi(\xi)|>M-\epsi$. Let $\delta>\epsi$ and set $B_\delta=\{\xi \in I \:\: |\:\: M-\delta\geq |\phi(\xi)|\}$.
Note that we have
\begin{equation*}
M-\epsi<|\phi(\xi)|\leq |\mu \phi(\xi)|\leq \int_I \frac{k(\xi,\eta)}{\overline k(\xi)} |\phi(\eta)|\mbox{ }d\eta.
\end{equation*}
 Then, we observe that
\begin{align*}
0
&\leq \int_I \frac{k(\xi,\eta)}{\overline k(\xi)}(|\phi(\eta)|-M+\epsi)\mbox{ }d\eta \\
&\leq \int_{B_\delta} \frac{k(\xi,\eta)}{\overline k(\xi)}(\epsi-\delta)\mbox{ }d\eta+\int_{I}\frac{k(\xi,\eta)}{\overline k(\xi)} \epsi \mbox{ }d\eta \leq -(\delta-\epsi)\frac{k_0}{k_1}|B_\delta|+\epsi .
\end{align*}
As $\epsi \to 0$, we get $|B_\delta|=0$ for all $\delta>0$, which implies $|\phi(\xi)|=M$ for almost all $\xi \in I$. 
Thus, for almost all $\xi \in I$, it holds that
\begin{equation*}
|\mu|M=|\mu||\phi(\xi)|\leq \int_{I} \frac{k(\xi,\eta)}{\overline k(\xi)} |\phi(\eta)|\mbox{ }d\eta=M,
\end{equation*}
which implies $|\mu|=1$.
\end{proof}

Let $\Lambda^*$ be the adjoint operator of $\Lambda$ given by
\[  \Lambda^* f(\eta):= \int_I \frac{k(\xi,\eta)}{\overline k(\xi)}f(\xi)\mbox{ }d\xi. \]
 Note that $r(\Lambda^*)=r(\Lambda)=1$.
By the Perron-Frobenius theorem or the Krein-Rutman theorem, 
there exists a positive $ \hat \gamma \in L^2(I)$ such that $\Lambda^* \hat \gamma=\hat \gamma$ in the sense of $L^2(I)$.
Define $\gamma: I\to \Rr $ by
\begin{equation*}
\gamma(\eta)=\int_I \frac{k(\xi,\eta)}{\overline k(\xi)}  \hat \gamma(\xi) \mbox{ }d\xi.
\end{equation*}
Then, $\gamma$ is a positive and bounded function satisfying that 
\begin{equation}\label{gamma1}
\int_I \frac{k(\xi,\eta)}{\overline k(\xi)} \gamma(\xi) \mbox{ }d\xi=\gamma(\eta)\quad (\eta \in I).
\end{equation}
Moreover, 
 we may assume $\|\gamma\|_{L^1(I)}=1$, and from \eqref{gamma1}, we have
\begin{equation}\label{gammaesti}
\frac{k_0}{k_1}\leq \gamma(\xi)\leq \frac{k_1}{k_0} \quad (\xi \in I).
\end{equation}
Here, we check the following property.
\begin{pro} Let $f \in L^1(I)$. Then,
\begin{equation}\label{couple}
\int_I \frac{\gamma(\xi)}{\overline k(\xi)}\Theta f(\xi) \mbox{ }d\xi=0.
\end{equation}
\end{pro}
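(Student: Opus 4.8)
The plan is to reduce the identity to the eigenvector property \eqref{gamma1} by a direct computation combined with an interchange of the order of integration. First I would unfold the definition of the coupling term. Writing $\overline k(\xi)=\int_I k(\xi,\eta)\,d\eta$, one has
\[
\Theta f(\xi)=f(\xi)\,\overline k(\xi)-\int_I k(\xi,\eta)f(\eta)\mbox{ }d\eta,
\]
so that after dividing by $\overline k(\xi)$ and multiplying by $\gamma(\xi)$ the integrand splits into two pieces:
\[
\frac{\gamma(\xi)}{\overline k(\xi)}\Theta f(\xi)=\gamma(\xi)f(\xi)-\frac{\gamma(\xi)}{\overline k(\xi)}\int_I k(\xi,\eta)f(\eta)\mbox{ }d\eta.
\]
The first piece integrates to $\int_I\gamma(\xi)f(\xi)\,d\xi$, and the whole assertion will follow once I show that the second piece integrates to the same quantity.

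For the second piece I would integrate in $\xi$ and swap the order of integration, so that the $\xi$-integral acts on the kernel factor while $f(\eta)$ is pulled out:
\[
\int_I\frac{\gamma(\xi)}{\overline k(\xi)}\int_I k(\xi,\eta)f(\eta)\mbox{ }d\eta\mbox{ }d\xi
=\int_I f(\eta)\left(\int_I\frac{k(\xi,\eta)}{\overline k(\xi)}\gamma(\xi)\mbox{ }d\xi\right)d\eta.
\]
The inner integral is, by the fixed-point relation \eqref{gamma1}, exactly $\gamma(\eta)$, so the right-hand side equals $\int_I f(\eta)\gamma(\eta)\,d\eta$. Subtracting this from the first piece gives zero, which is the claimed identity \eqref{couple}.

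The only genuine point requiring care is the justification of Fubini's theorem for the double integral. Here the hypotheses make this routine: $f\in L^1(I)$, $\gamma$ is bounded by \eqref{gammaesti}, $k$ is bounded above by $k_1$, and $\overline k(\xi)\geq k_0>0$, so the integrand $\tfrac{\gamma(\xi)}{\overline k(\xi)}k(\xi,\eta)f(\eta)$ is dominated by a constant multiple of $|f(\eta)|$ on the finite measure space $I\times I$ and is therefore absolutely integrable. Thus the interchange is legitimate, and I would record this as the single nontrivial verification in an otherwise purely computational argument. The essential input is the eigenfunction identity \eqref{gamma1} for $\gamma$; the proposition can be read as the statement that $\gamma/\overline k$ is the density making $\Theta$ integrate to zero, i.e.\ that $\gamma$ plays the role of the stationary measure for the coupling operator.
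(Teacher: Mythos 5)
Your proof is correct and follows essentially the same route as the paper: split $\Theta f$ into its two pieces, integrate, apply Fubini to the cross term, and invoke the eigenfunction relation \eqref{gamma1} to identify the inner integral as $\gamma(\eta)$. The only difference is that you spell out the domination argument justifying Fubini, which the paper leaves implicit.
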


\begin{proof}Using \eqref{gamma1} and Fubini's theorem, we obtain
\begin{align*}
\int_I \frac{\gamma(\xi)}{\overline k(\xi)} \Theta f(\xi) \mbox{ }d\xi
&=\int_I \frac{\gamma(\xi)}{\overline k(\xi)}f(\xi)\int_I k(\xi,\eta)\mbox{ }d\eta d\xi -\int_I \frac{\gamma(\xi)}{\overline k(\xi)}\int_I k(\xi,\eta)f(\eta)\mbox{ }d\eta d\xi \\
&=\int_I \gamma(\xi)f(\xi)\mbox{ }d\xi-\int_{I} f(\eta)\left( \int_{I} \gamma(\xi)\frac{k(\xi,\eta)}{\overline k(\xi)}\mbox{ }d\xi \right) \mbox{ }d\eta =0.
\end{align*}
\end{proof}

\section{Discount approximation}\label{discount}
In this Section, We consider \eqref{DP} under the assumptions: 



\begin{enumerate}[label=(B\arabic*)]

\item
There exist constants $C_1, C_2 >0$ and $m>1$ such that for all $(x,p,\xi)\in \Tt^d \times \Rr^d \times I$,
\[c_1 |p|^m -C_2 \leq H(x,p,\xi).\]
\label{coercive}

\item For each $R>0$, there exists a modulus of continuity $\omega_R$ such that
\[ |H(x,p,\xi)-H(y,p,\xi)|\leq \omega_R(|x-y|), \]
for all $x,y \in \Tt^d$, $p\in B(0,R)$ and $\xi \in I$.
\label{growth}

\end{enumerate}

As in \cite[Section 4]{Ishiishimano}, we can see that \eqref{DP} has the unique viscosity solution $v^\alpha \in C(\Tt^d) \otimes \mathcal{B}(I)$. Here, we give a proof of a comparison principle for \eqref{DP}.

\begin{pro}\label{new-comparison}
Suppose that Assumptions \ref{coercive} and \ref{growth} hold. Let $u\in \mathcal{U}^+(\Tt^d \times I)$ and $v\in \mathcal{U}^-(\Tt^d\times I)$ be a viscosity subsolution and a viscosity supersolution to \eqref{DP}, respectively. Assume that both $u$ and $-v$ are bounded above on $\Tt^d \times I$. Then, $u\leq v$ in $\Tt^d \times I$. 
\end{pro}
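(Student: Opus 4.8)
The plan is to prove a comparison principle for the weakly coupled discounted system via the standard doubling-of-variables technique, adapted to handle the integral coupling term $\Theta$. The key structural feature is that the coupling term, when evaluated at a point of maximal difference $u-v$ over the entire product space $\mathbb{T}^d \times I$, has a favorable sign. So the overall strategy is: suppose for contradiction that $\sup_{\mathbb{T}^d \times I}(u-v) = M > 0$; use doubling in the $x$-variable to produce approximate maximum points; exploit the viscosity inequalities together with coercivity \ref{coercive} and continuity \ref{growth}; and finally use the sign of the coupling term to derive the contradiction $\alpha M \leq 0$.

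Concretely, first I would consider, for each fixed $\xi \in I$, the standard penalized functional $\Phi_\varepsilon(x,y) = u(x,\xi) - v(y,\xi) - \frac{1}{2\varepsilon}|x-y|^2$ on $\mathbb{T}^d \times \mathbb{T}^d$. Since $u(\cdot,\xi)$ is upper semicontinuous, $-v(\cdot,\xi)$ is upper semicontinuous, and both are bounded above, this attains a maximum at some $(x_\varepsilon, y_\varepsilon)$. The classical penalization estimates give $|x_\varepsilon - y_\varepsilon|^2/\varepsilon \to 0$ and, using the coercivity assumption \ref{coercive}, the gradients $p_\varepsilon = (x_\varepsilon - y_\varepsilon)/\varepsilon$ stay bounded (this is the mechanism by which coercivity yields equi-Lipschitz-type control). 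Writing the subsolution inequality at $x_\varepsilon$ with test-gradient $p_\varepsilon$ and the supersolution inequality at $y_\varepsilon$ with the same test-gradient, and subtracting, the Hamiltonian difference $H(x_\varepsilon, p_\varepsilon, \xi) - H(y_\varepsilon, p_\varepsilon, \xi)$ is controlled by $\omega_R(|x_\varepsilon - y_\varepsilon|) \to 0$ via assumption \ref{growth}.

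The delicate point is the coupling term. Subtracting the two inequalities produces a term of the form
\[
\alpha\big(u(x_\varepsilon,\xi) - v(y_\varepsilon,\xi)\big) + \Theta u(x_\varepsilon,\xi) - \Theta v(y_\varepsilon,\xi) \leq \omega_R(|x_\varepsilon - y_\varepsilon|).
\]
Here I would argue that the difference of coupling terms is bounded below by a quantity controlled by the value of $u-v$ at the maximizing component minus its supremum. The cleanest route is to select the component $\xi$ near where the supremum $M$ of $u-v$ is (approximately) attained: choosing $\xi$ so that $u(x_\varepsilon,\xi)-v(y_\varepsilon,\xi)$ is close to $M$, the coupling contribution $\int_I k(\xi,\eta)\big[(u(x_\varepsilon,\xi)-u(x_\varepsilon,\eta)) - (v(y_\varepsilon,\xi)-v(y_\varepsilon,\eta))\big]\,d\eta$ becomes nonnegative up to an error, because $(u-v)(\cdot,\eta) \leq M$ throughout while $(u-v)$ at the chosen component is near $M$. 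Passing $\varepsilon \to 0$ and then optimizing over the choice of $\xi$ yields $\alpha M \leq 0$, contradicting $M > 0$.

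**The main obstacle** I anticipate is precisely the handling of the coupling term, since here $u$ and $v$ are evaluated at different spatial points $x_\varepsilon$ and $y_\varepsilon$, so the integrals $\Theta u(x_\varepsilon,\xi)$ and $\Theta v(y_\varepsilon,\xi)$ do not combine cleanly into a single integral of $(u-v)$. I would address this by carefully taking the limit $\varepsilon \to 0$ first, so that $x_\varepsilon, y_\varepsilon$ collapse to a common point, and only afterwards exploiting that the quantity $u(x_\varepsilon,\eta)-v(y_\varepsilon,\eta)$ is dominated by $M = \sup(u-v)$ for every $\eta$. A subtlety is that the supremum over $\mathbb{T}^d \times I$ need not be attained (no compactness in $\xi$, and only measurability in $\xi$), so I would work with near-maximizing sequences $\xi_n$ and pass to the limit in the resulting chain of inequalities, relying on the uniform bounds $k_0 \leq k(\xi,\eta) \leq k_1$ to keep all integral terms under control. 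The coercivity \ref{coercive} is used only to bound the penalization gradients; the sign of $\alpha$ and the positivity of $k$ do the real work in closing the argument.
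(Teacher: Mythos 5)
Your proposal is correct, and its skeleton --- argue by contradiction, double only the spatial variable, use \ref{coercive} to bound the penalization gradient, use \ref{growth} to control the Hamiltonian difference, and extract the contradiction $\alpha M\leq 0$ from the sign of the coupling term near a maximizing component --- is the same as the paper's. Where you genuinely differ is in the execution of the crucial step, the coupling term. The paper takes $\hat\xi_\epsi$ to be an $\epsi$-maximizer of the \emph{doubled} functional $\Phi(x,y,\xi)=u(x,\xi)-v(y,\xi)-|x-y|^2/(2\epsi)$ jointly in $(x,y,\xi)$, and then an exact maximizer $(x_\epsi,y_\epsi)$ in $(x,y)$ for that fixed $\hat\xi_\epsi$. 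Since $\Phi(x_\epsi,y_\epsi,\hat\xi_\epsi)\geq\sup\Phi-\epsi$ and the penalty term is common to $\Phi(x_\epsi,y_\epsi,\hat\xi_\epsi)$ and $\Phi(x_\epsi,y_\epsi,\eta)$, one gets, pointwise in $\eta$ and at fixed $\epsi$, the inequality $\bigl[u(x_\epsi,\hat\xi_\epsi)-v(y_\epsi,\hat\xi_\epsi)\bigr]-\bigl[u(x_\epsi,\eta)-v(y_\epsi,\eta)\bigr]\geq-\epsi$, hence $\Theta u(x_\epsi,\hat\xi_\epsi)-\Theta v(y_\epsi,\hat\xi_\epsi)\geq-\epsi k_1$; the off-diagonal difficulty you identify (that $u(x_\epsi,\eta)-v(y_\epsi,\eta)$, evaluated at \emph{different} spatial points, is not dominated by $M$) never arises, and no interchange of limit and integral is needed --- the contradiction is reached at fixed $\epsi$ and then $\epsi\to 0$ is taken only on scalar quantities. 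You instead put the near-optimality in $\xi$ on the diagonal supremum (choosing $\xi$ with $M_\xi:=\sup_x\,(u(x,\xi)-v(x,\xi))$ close to $M$), send $\epsi\to 0$ first so that $x_\epsi,y_\epsi$ collapse to a common point, and only then dominate by $M$; this works, but the limit of the integral term requires Fatou's lemma together with the componentwise semicontinuity of $u(\cdot,\eta)$ and $v(\cdot,\eta)$ (available from the definition of $\mathcal{U}^{\pm}$ and the boundedness hypotheses, which make the integrand uniformly bounded below), yielding $\alpha M_\xi\leq k_1(M-M_\xi)$ for each such $\xi$ and then $\alpha M\leq 0$ along a sequence $M_{\xi_n}\to M$. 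Both routes are sound; yours stays closer to the standard viscosity toolkit, while the paper's placement of the $\xi$-near-optimality inside the doubled functional buys a purely pointwise estimate that dispenses with Fatou and the order-of-limits care altogether.
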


\begin{proof}
Suppose that $\sup_{\Tt^d \times I} u(x,\xi)-v(x,\xi)=:\lambda>0$. Let $\epsi>0$ be so small and set
\begin{equation*}\label{test}
\Phi(x,y,\xi):=u(x,\xi)-v(y,\xi)-\frac{|x-y|^2}{2\epsi}.
\end{equation*}
Then, there exists $( \hat x_\epsi, \hat y_\epsi, \hat \xi_\epsi) \in \Tt^d \times \Tt^d \times I$ such that 
\[ \sup_{\Tt^d \times \Tt^d \times I}\Phi-\epsi<\Phi(\hat x_\epsi, \hat y_\epsi, \hat \xi_\epsi).\]
Moreover, we can take $(x_\epsi,y_\epsi)\in \Tt^d\times \Tt^d$ satisfying $\Phi(x_\epsi,y_\epsi,\hat \xi_\epsi)=\max_{\Tt^d\times \Tt^d}\Phi(x,y,\hat \xi_\epsi)$. Choose their subsequences such that $x_\epsi \to \hat x$ and $y_\epsi \to \hat y$ for some $\hat x ,\hat y\in \Tt^d$.
 Noting that  
\begin{equation*}\label{eeee}
\lambda-\epsi<\Phi(x_\epsi,y_\epsi,\hat \xi_\epsi),
\end{equation*}
we have
\[\frac{|x_\epsi-y_\epsi|^2}{2\epsi} \leq \sup_{\Tt^d \times I} u + \sup_{\Tt^d\times I} (-v) +\epsi-\lambda. \]
Hence, we can see $\hat x=\hat y$.
Moreover, because $\frac{|x_\epsi-y_\epsi|^2}{2\epsi}\geq0$, we have
\begin{equation}\label{dddd}
\lambda-\epsi \leq u(x_\epsi,\hat \xi_\epsi)-v(y_\epsi.\hat \xi_\epsi).
\end{equation}

On the other hand, in view of the definition of viscosity solutions, we get
\begin{equation}\label{subbb}
\alpha u(x_\epsi,\hat \xi_\epsi)+H(x_\epsi,\frac{x_\epsi-y_\epsi}{\epsi},\hat \xi_\epsi)+\theta u(x_\epsi,\hat \xi_\epsi)\leq 0
\end{equation}
and
\begin{equation}\label{suppp}
\alpha v(y_\epsi,\hat \xi_\epsi)+H(y_\epsi,\frac{x_\epsi-y_\epsi}{\epsi},\hat \xi_\epsi)+\theta v(y_\epsi, \hat \xi_\epsi)\geq 0.
\end{equation}
Due to Assumption \ref{coercive}, \eqref{subbb} implies $|\frac{x_\epsi-y_\epsi}{\epsi}|< \hat R$ with some constant $\hat R>0$. 
Subtract \eqref{subbb} and \eqref{suppp} each other, to yield
\begin{align}\label{cccc}
\alpha\{ u(x_\epsi, \hat \xi_\epsi)-v(y_\epsi,\hat \xi_\epsi)\}
&\leq H(y_\epsi,\frac{x_\epsi-y_\epsi}{\epsi},\hat \xi_\epsi)-H(x_\epsi,\frac{x_\epsi-y_\epsi}{\epsi}, \hat \xi_\epsi)-\theta u(x_\epsi,\hat \xi_\epsi)+\theta v(y_\epsi, \hat \xi_\epsi) \notag \\
&\leq \omega_{\hat R} (|x_\epsi-y_\epsi|) -\theta u(x_\epsi,\hat \xi_\epsi)+\theta v(y_\epsi, \hat \xi_\epsi).
\end{align}
Because $\sup_{\Tt^d\times \Tt^d \times I} \Phi-\epsi<\Phi(x_\epsi,y_\epsi,\hat \xi_\epsi)$, we have

\begin{align*}
\theta u(x_\epsi,\hat \xi_\epsi)-\theta v(y_\epsi, \hat \xi_\epsi)
&\geq \int_I k(\hat \xi_\epsi,\eta)\{ \Phi(x_\epsi,y_\epsi,\hat \xi_\epsi)-\Phi(x_\epsi,y_\epsi,\eta)\} \mbox{ }d\eta \\
&\geq-\epsi \int_I k(\hat \xi_\epsi,\eta) \mbox{ }d\eta \geq -\epsi k_1.
\end{align*}
By \eqref{cccc}, we have 
\begin{equation*}
u(x_\epsi,\hat \xi_\epsi)-v(y_\epsi.\hat \xi_\epsi) \leq o_\epsi(1).
\end{equation*}
In view of \eqref{dddd}, we observe
$\lambda \leq 0$, which is a contradiction.
\end{proof}


In this Section, we obtain  uniform Lipschitz estimates for $v^\alpha$ with respect to $x\in \Tt^d$.
We first give an estimate on the gradient of $v^\alpha$ which may depend on $\alpha$.

\begin{lem} Suppose that Assumptions \ref{coercive} and \ref{growth} hold. Let $v^\alpha$ be the viscosity solution to \eqref{DP}.
Then, for all $\alpha>0$, there exist a constant $L_\alpha >0$ such that for $x,y \in \Tt^d$ and $\xi \in I$,
\begin{equation*}
|v^\alpha(x,\xi)-v^\alpha(y,\xi)|\leq L_\alpha|x-y|.
\end{equation*} 
\end{lem}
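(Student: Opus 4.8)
The plan is to derive the estimate from coercivity alone, via a doubling-of-variables argument modeled on the proof of Proposition~\ref{new-comparison}. The candidate Lipschitz constant $L_\alpha$ will be chosen strictly larger than an a priori bound on the admissible test gradients, which is extracted from Assumption~\ref{coercive}.

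First I would record that $v^\alpha$ is bounded. Writing $M_0:=\sup_{\Tt^d\times I}|H(x,0,\xi)|$, which is finite since $F(\cdot,0,\cdot)$ and $f$ are bounded, the constants $\pm M_0/\alpha$ are respectively a super- and a subsolution of \eqref{DP}, so Proposition~\ref{new-comparison} gives $\|v^\alpha\|_{L^\infty(\Tt^d\times I)}\le K_\alpha:=M_0/\alpha$. Consequently $|\alpha v^\alpha|\le M_0$ and $|\Theta v^\alpha|\le 2k_1K_\alpha$ everywhere. Now suppose that for some fixed $\xi\in I$ and some $\phi\in C^1(\Tt^d)$ the map $v^\alpha(\cdot,\xi)-\phi$ has a local maximum at $x_0$. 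The subsolution inequality together with the two bounds just stated yields $H(x_0,D\phi(x_0),\xi)\le M_0+2k_1K_\alpha$, and then Assumption~\ref{coercive} forces
\[ |D\phi(x_0)|\le R_\alpha:=\Big(\tfrac{C_2+M_0+2k_1K_\alpha}{c_1}\Big)^{1/m}. \]
The key point is that $R_\alpha$ depends only on $\alpha$ (through $K_\alpha$) and not on the particular test function.

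Next I fix $L_\alpha:=R_\alpha+1$ and set $M:=\sup_{\Tt^d\times\Tt^d\times I}\big(v^\alpha(x,\xi)-v^\alpha(y,\xi)-L_\alpha|x-y|\big)$, the goal being $M\le 0$. Arguing by contradiction, assume $M>0$. Exactly as in Proposition~\ref{new-comparison}, I would choose, for small $\epsi\in(0,M)$, a near-maximizing $\hat\xi_\epsi\in I$, and then, for that fixed parameter, an exact maximizer $(x_\epsi,y_\epsi)\in\Tt^d\times\Tt^d$ of $(x,y)\mapsto v^\alpha(x,\hat\xi_\epsi)-v^\alpha(y,\hat\xi_\epsi)-L_\alpha|x-y|$; this two-step selection is needed because $v^\alpha$ is only Borel measurable in $\xi$. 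Since the maximal value exceeds $M-\epsi>0$, we must have $x_\epsi\neq y_\epsi$, so $x\mapsto L_\alpha|x-y_\epsi|$ is smooth near $x_\epsi$ — and may be extended to a genuine $C^1(\Tt^d)$ test function coinciding with it near $x_\epsi$ — with gradient of norm exactly $L_\alpha$ at $x_\epsi$. Applying the subsolution inequality there and invoking the a priori bound of the previous paragraph gives $L_\alpha=|D\phi(x_\epsi)|\le R_\alpha$, contradicting $L_\alpha=R_\alpha+1$.

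Hence $M\le 0$, that is $v^\alpha(x,\xi)-v^\alpha(y,\xi)\le L_\alpha|x-y|$ for all $x,y\in\Tt^d$ and $\xi\in I$; interchanging $x$ and $y$ yields the stated bound with the absolute value. I expect the delicate points to be organizational rather than analytic: channelling the merely Borel dependence on $\xi$ through the same near-maximum device used in Proposition~\ref{new-comparison}, and justifying that $x\mapsto L_\alpha|x-y_\epsi|$ can be used as a $C^1(\Tt^d)$ test function at the (necessarily off-diagonal) maximizer. It is worth emphasizing that the supersolution inequality plays no role here: coercivity of $H$ applied to the subsolution alone, after bounding $\alpha v^\alpha$ and $\Theta v^\alpha$ via $\|v^\alpha\|_\infty\le K_\alpha$, already produces the required gradient bound.
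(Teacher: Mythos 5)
Your proof is correct and follows essentially the same route as the paper: the bound $\|v^\alpha\|_{L^\infty(\Tt^d\times I)}\le M_0/\alpha$ from the constant sub/supersolutions, the resulting bound on $\alpha v^\alpha$ and on the coupling term $\Theta v^\alpha$, and then coercivity \ref{coercive} to bound the admissible test gradients. The only differences are cosmetic: the paper obtains the $L^\infty$ bound from the Perron construction rather than from Proposition \ref{new-comparison}, and it leaves implicit the standard final step you spell out (that the viscosity-sense gradient bound implies the Lipschitz estimate), which you verify by doubling variables with the distance test function --- where, strictly speaking, at a maximizer lying on the cut locus of $y_\epsi$ one should test with a single smooth branch $x\mapsto L_\alpha|x-y_\epsi+k_0|$ dominating the torus distance, a routine adjustment.
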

\begin{proof}
Let $M:= \sup_{(x,\xi) \in \Tt^d\times I}| H(x,0,\xi)|$. Then $-M/\alpha$ and $M/\alpha$ are a subsolution and a supersolution to \eqref{DP}, respectively. In the process of constructing the viscosity solution $v^\alpha$ to \eqref{DP}, we use Perron's method (see \cite[Section 4]{Ishiishimano} for details). Then, we have $ -M/\alpha \leq v^\alpha \leq M/\alpha$ in $\Tt^d \times I$. Thus, it holds
\begin{align*}
H(x,Dv^\alpha,\xi)
&\leq {M}-\int_I k(\xi,\eta)(v^\alpha(x,\xi)-v^\alpha(x,\eta))\mbox{ }d\eta \leq {M}+\frac{2k_1M}{\alpha}
\end{align*}
in the sense of viscosity solutions. In light of \ref{coercive}, we get the conclusion.
\end{proof}

To obtain the uniform estimate, we handle the coupling term $\theta v^\alpha$ as follows. 

\begin{lem}
Suppose that Assumptions \ref{coercive} and \ref{growth} hold. Let $v^\alpha$ be the viscosity solution to \eqref{DP}.
There exist constants $M_1, M_2 >0$ such that for all $\alpha>0$, $x, y \in \Tt^d$ and $\xi \in I$, we have
\begin{equation}\label{kesti} 
\left|\int_{I}  k(\xi,\eta)(v^\alpha(x,\xi)-v^\alpha(y,\eta))\mbox{ }d\eta \right| \leq M_1+M_2 \|D v^\alpha\|_{L^\infty(\Tt^d\times I)}.
\end{equation}
\end{lem}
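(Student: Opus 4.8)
The plan is to reduce \eqref{kesti} to a bound, linear in $R:=\|Dv^\alpha\|_{L^\infty(\Tt^d\times I)}$, on the oscillation of $v^\alpha$ in the component variable, and then to control that oscillation by evaluating the viscosity inequalities at the global extrema of $v^\alpha$ with a \emph{constant} test function. Throughout I would use the a priori information already available from the two preceding lemmas: the bound $-M/\alpha\le v^\alpha\le M/\alpha$ with $M=\sup_{\Tt^d\times I}|H(\cdot,0,\cdot)|$, so that $\alpha|v^\alpha|\le M$, and the fact that each $v^\alpha(\cdot,\xi)$ is $R$-Lipschitz on $\Tt^d$, so $|v^\alpha(x,\xi)-v^\alpha(y,\xi)|\le R\,d_0$ with $d_0:=\mathrm{diam}(\Tt^d)$. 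First I would insert $v^\alpha(y,\xi)$ and split
\[
\int_I k(\xi,\eta)\bigl(v^\alpha(x,\xi)-v^\alpha(y,\eta)\bigr)\,d\eta
=\overline k(\xi)\bigl(v^\alpha(x,\xi)-v^\alpha(y,\xi)\bigr)+\Theta v^\alpha(y,\xi).
\]
The first term is at most $k_1 d_0 R$ by the $x$-Lipschitz estimate. Writing $\mathrm{osc}:=\sup_{\Tt^d\times I}v^\alpha-\inf_{\Tt^d\times I}v^\alpha$, I have $|v^\alpha(y,\xi)-v^\alpha(y,\eta)|\le \mathrm{osc}$ for every $\eta$, hence $|\Theta v^\alpha(y,\xi)|\le \overline k(\xi)\,\mathrm{osc}\le k_1\,\mathrm{osc}$. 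Thus the whole statement reduces to showing $\mathrm{osc}\le C_1+C_2 R$ with constants independent of $\alpha$.

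To bound the oscillation, set $M_0:=\sup_{\Tt^d\times I}v^\alpha$ and $m_0:=\inf_{\Tt^d\times I}v^\alpha$ (finite for fixed $\alpha$), and $A(x):=\int_I v^\alpha(x,\eta)\,d\eta$. Since $v^\alpha(\cdot,\xi)$ is continuous on the compact torus, for $\epsi>0$ I can choose a near-maximal component $\xi_1$ and let $x_1$ maximise $v^\alpha(\cdot,\xi_1)$, so $v^\alpha(x_1,\xi_1)>M_0-\epsi$. Testing the subsolution inequality at $x_1$ with a constant $\phi$ (so $D\phi(x_1)=0$) gives $\Theta v^\alpha(x_1,\xi_1)\le-\alpha v^\alpha(x_1,\xi_1)-H(x_1,0,\xi_1)\le 2M$. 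On the other hand, using $v^\alpha(x_1,\cdot)\le M_0$ together with $k_0\le k\le k_1$, one computes $\Theta v^\alpha(x_1,\xi_1)\ge \overline k(\xi_1)\bigl(v^\alpha(x_1,\xi_1)-M_0\bigr)+k_0\bigl(M_0-A(x_1)\bigr)\ge -k_1\epsi+k_0\bigl(M_0-A(x_1)\bigr)$. Combining these yields $M_0-A(x_1)\le (2M+k_1\epsi)/k_0$.

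A symmetric argument at a near-minimal component, using the supersolution inequality at an $x$-minimiser $x_0$, gives $A(x_0)-m_0\le(2M+k_1\epsi)/k_0$. Since $A$ is $R$-Lipschitz on $\Tt^d$ (directly from the $x$-Lipschitz bound on $v^\alpha$), we have $|A(x_1)-A(x_0)|\le R\,d_0$. Adding the three inequalities and letting $\epsi\to0$ (extracting limit points $x_0,x_1$ by compactness and using the continuity of $A$) gives $\mathrm{osc}=M_0-m_0\le 4M/k_0+R\,d_0$. Assembling everything, the left-hand side of \eqref{kesti} is at most $k_1 d_0 R+k_1\bigl(4M/k_0+R\,d_0\bigr)=4k_1M/k_0+2k_1 d_0 R$, which is the claimed bound with $M_1=4k_1M/k_0$ and $M_2=2k_1 d_0$.

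I expect the main obstacle to be the oscillation estimate, and within it the fact that $v^\alpha(x,\cdot)$ is only Borel measurable in $\xi$, so the global supremum and infimum over $\Tt^d\times I$ need not be attained in the component variable. This is what forces the use of near-extremal components $\xi_0,\xi_1$ and the $\epsi$-limit; the Lipschitz continuity of the averaged function $A$ in $x$ is precisely what lets this limit be taken cleanly and produces the term linear in $R$. The remaining inputs — applying the viscosity inequalities at an interior $x$-extremum with a constant test function, and comparing $\Theta v^\alpha$ there with the average $A$ through the two-sided bound $k_0\le k\le k_1$ — are then routine.
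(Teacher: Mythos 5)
Your proof is correct, but it takes a genuinely different route from the paper's, so it is worth comparing the two. Both arguments rest on the same elementary mechanism --- evaluating the viscosity inequality at an extremum of $v^\alpha(\cdot,\xi)$ in $x$ with a constant test function, combined with the $x$-Lipschitz bound --- but the paper never forms the global supremum or infimum over $\Tt^d\times I$. Instead it fixes $\xi$, takes a minimum point $x_\xi$ of $v^\alpha(\cdot,\xi)$ on $\Tt^d$, obtains $\int_I k(\xi,\eta)\{v^\alpha(x_\xi,\xi)-v^\alpha(x_\xi,\eta)\}\,d\eta \ge -2M$ from the supersolution inequality at $x_\xi$, and then bounds the integrand $v^\alpha(x,\xi)-v^\alpha(y,\eta)$ from below, pointwise in $\eta$, by $v^\alpha(x_\xi,\xi)-v^\alpha(x_\xi,\eta)-\|Dv^\alpha\|_{L^\infty(\Tt^d\times I)}$; integrating against $k(\xi,\eta)\ge 0$ gives one side of \eqref{kesti}, and the mirror argument at a maximum point of $v^\alpha(\cdot,\xi)$ with the subsolution inequality gives the other. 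This sidesteps entirely the non-attainment issue in $\xi$ that you single out as the main obstacle: that obstacle is an artifact of your choice to route the estimate through the global oscillation $\sup v^\alpha-\inf v^\alpha$. Your detour costs extra machinery (the near-extremal components $\xi_0,\xi_1$, the $\epsi$-limit, the averaged function $A$) and, notably, uses the lower bound $k\ge k_0$ on the kernel, which the paper's proof of this lemma does not need at all; accordingly your constants degrade to involve the ratio $k_1/k_0$ (your $M_1=4k_1M/k_0$ versus the paper's $M_1=2M$). What your route buys in exchange is a strictly stronger intermediate statement, the uniform oscillation bound $\sup_{\Tt^d\times I}v^\alpha-\inf_{\Tt^d\times I}v^\alpha\le 4M/k_0+d_0\|Dv^\alpha\|_{L^\infty(\Tt^d\times I)}$, which is of independent interest and immediately yields the bound on $\|\Theta v^\alpha\|_{L^\infty(\Tt^d\times I)}$ in the Proposition that follows in the paper. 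One small cleanup: the extraction of limit points of $x_0,x_1$ at the end is unnecessary, since your three inequalities hold for every $\epsi>0$ with bounds independent of the particular near-extremal points, so you may let $\epsi\to 0$ directly in the resulting inequality between real numbers.
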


\begin{proof}
Fix $\xi \in I$. Take $x_\xi \in \Tt^d$ as $\min_{x\in \Tt^d} v^\alpha (x,\xi)=v^\alpha(x_\xi,\xi)$. Then, by the definition of supersolutions, we get
\begin{equation}\label{ksup}
\int_I k(\xi,\eta)\{ v^\alpha(x_\xi,\xi)-v^\alpha(x_\xi,\eta)\} \mbox{ }d\eta \geq -M-H(x_\xi,0,\xi) \geq -M_1.
\end{equation}

 Next, for all $x,y\in \Tt^d$ and $\eta \in I$, we have
\begin{align*}
v^\alpha(x,\xi)-v^\alpha(y,\eta) 
&\geq v^\alpha(x_\xi, \xi)-v^\alpha(y,\eta)\\
&= v^\alpha(x_\xi, \xi)-v^\alpha(x_\xi,\eta)+v^\alpha(x_\xi,\eta)-v^\alpha(y,\eta)\\
&\geq  v^\alpha(x_\xi, \xi)-v^\alpha(x_\xi,\eta)-\|Dv^\alpha \|_{L^\infty(\Tt^d\times I)}.
\end{align*}
Multiply the above by $k(\xi,\eta)$ and integrate over $ I$, to get
\begin{align*}
\int_{I}  k(\xi,\eta)(v^\alpha(x,\xi)-v^\alpha(y,\eta))\mbox{ }d\eta 
\geq& \int_I k(\xi,\eta)\{ v^\alpha(x_\xi,\xi)-v^\alpha(x_\xi,\eta)\} \mbox{ }d\eta -k_1 \|Dv^\alpha \|_{L^\infty}.
\end{align*}
Using \eqref{ksup}, we get one side inequality of \eqref{kesti}. By a similar argument for the case of subsolutions, we get the opposite inequality.
\end{proof}

Finally, we obtain the uniform bound for $Dv^\alpha$ and $\theta v^\alpha$ in $\alpha>0$.

\begin{pro} Suppose that Assumptions \ref{coercive} and \ref{growth} hold. Let $v^\alpha$ be the viscosity solution to \eqref{DP}.
There exists a constant $C>0$ such that for all $\alpha>0$, 
\begin{equation}\label{esti}
 \|D v^\alpha \|_{L^\infty(\Tt^d \times I)}  +\|\Theta v^\alpha\|_{L^\infty(\Tt^d\times I)}  \leq C.
\end{equation}
\end{pro}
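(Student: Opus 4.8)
The plan is to combine the superlinear coercivity \ref{coercive} with the coupling estimate \eqref{kesti} specialized to $y=x$, together with the a priori finiteness of $\|Dv^\alpha\|_{L^\infty}$ coming from the preceding two lemmas. The crucial structural feature is that $\Theta v^\alpha$ is controlled \emph{linearly} by $\|Dv^\alpha\|_{L^\infty}$, whereas \ref{coercive} controls $H$ from below by $|Dv^\alpha|^m$ with $m>1$. The gap between these two growth rates is exactly what produces a bound independent of $\alpha$.

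First I record the elementary bounds. With $M:=\sup_{(x,\xi)\in\Tt^d\times I}|H(x,0,\xi)|$ as in the first Lipschitz lemma, the Perron construction of $v^\alpha$ gives $-M/\alpha\le v^\alpha\le M/\alpha$, hence $|\alpha v^\alpha|\le M$ on $\Tt^d\times I$. Next, taking $y=x$ in \eqref{kesti} yields $\|\Theta v^\alpha\|_{L^\infty(\Tt^d\times I)}\le M_1+M_2\|Dv^\alpha\|_{L^\infty(\Tt^d\times I)}$; note that the preceding Lipschitz lemma already guarantees that $L_\alpha:=\|Dv^\alpha\|_{L^\infty(\Tt^d\times I)}<\infty$ for each fixed $\alpha$, so the right-hand side is a finite number.

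Second I exploit the subsolution property. Since each $v^\alpha(\cdot,\xi)$ is Lipschitz, it is differentiable almost everywhere, and at every point of differentiability the viscosity subsolution inequality for \eqref{DP} becomes the pointwise inequality $H(x,Dv^\alpha(x,\xi),\xi)\le -\alpha v^\alpha(x,\xi)-\Theta v^\alpha(x,\xi)$. Combining this with $|\alpha v^\alpha|\le M$, the bound on $\Theta v^\alpha$ above, and the lower bound $c_1|Dv^\alpha|^m-C_2\le H(x,Dv^\alpha,\xi)$ from \ref{coercive}, I obtain at almost every point
\begin{equation*}
c_1|Dv^\alpha(x,\xi)|^m-C_2\le M+M_1+M_2\,\|Dv^\alpha\|_{L^\infty(\Tt^d\times I)}.
\end{equation*}
Taking the essential supremum over $(x,\xi)$ and writing $L=L_\alpha$ reduces everything to the single scalar inequality $c_1L^m\le M+M_1+C_2+M_2L$. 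Because $m>1$, the map $L\mapsto c_1L^m-M_2L$ tends to $+\infty$, so this inequality can hold only for $L$ below a threshold $C_0$ determined solely by $c_1,C_2,m,M,M_1,M_2$, all independent of $\alpha$. Hence $\|Dv^\alpha\|_{L^\infty}\le C_0$ for every $\alpha>0$; feeding this back into the coupling estimate gives $\|\Theta v^\alpha\|_{L^\infty}\le M_1+M_2C_0$, and adding the two bounds yields \eqref{esti} with $C=C_0+M_1+M_2C_0$.

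The main obstacle is the apparent circularity: the bound on $\Theta v^\alpha$ that drives the coercivity argument itself involves $\|Dv^\alpha\|_{L^\infty}$. This is resolved precisely by first invoking the previous Lipschitz lemma to know that $L_\alpha<\infty$ (so that substituting $L_\alpha$ into both sides of a scalar inequality is legitimate) and then using the superlinearity $m>1$ to absorb the linear term $M_2L$ into $c_1L^m$. A secondary point requiring care is the passage from the viscosity subsolution inequality to the pointwise almost-everywhere inequality, which is standard for locally Lipschitz subsolutions of first-order equations and may alternatively be phrased through the usual ``coercivity implies a gradient bound for subsolutions'' lemma applied directly in the viscosity sense.
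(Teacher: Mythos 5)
Your proposal is correct and follows essentially the same route as the paper: bound $|\alpha v^\alpha|\le M$, use the coupling estimate \eqref{kesti} with $y=x$ to control $\Theta v^\alpha$ linearly in $\|Dv^\alpha\|_{L^\infty}$, invoke the a.e. pointwise (sub)solution inequality together with the coercivity \ref{coercive}, and absorb the linear term via $m>1$ before feeding the gradient bound back into \eqref{kesti}. Your added remarks on the finiteness of $L_\alpha$ (from the preceding Lipschitz lemma) and on passing to the pointwise inequality at points of differentiability simply make explicit what the paper leaves implicit.
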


\begin{proof}
For all $\xi \in I$ and almost all $x\in \Tt^d$, it holds
\begin{align*}
H(x,Dv^\alpha(x,\xi),\xi)
&=-\alpha v^\alpha-\int_{I} k(\xi,\eta)\{ v^\alpha(x,\xi)-v^\alpha(x,\eta)\} \mbox{ }d\eta \\
& \leq M+M_1+M_2 \|Dv^\alpha \|_{L^\infty(\Tt^d\times I)},
\end{align*}
where we use \eqref{kesti}. In light of \ref{coercive}, we get 
\[ \|Dv^\alpha \|_{L^\infty(\Tt^d\times I)} \leq C. \]
 By \eqref{kesti} again, we get the conclusion.
\end{proof}

Here, we give a result on the stability of viscosity solutions for half-relaxed limits.

\begin{pro}\label{stability}
Suppose that Assumptions \ref{coercive} and \ref{growth} hold. Let $v^\alpha \in C(\Tt^d) \otimes \mathcal{B}(I)$ be the viscosity solution to \eqref{DP}. Additionally, suppose that there exist $c: I \to \Rr$ and $x_0 \in \Tt^d$ such that for all $\xi\in I$,
\begin{equation}\label{tech}
\lim_{\alpha \to 0}\alpha v^\alpha(x_0,\xi)+\Theta v^\alpha(x_0,\xi)=-c(\xi).
\end{equation}
 Let $\tilde v^\alpha (x,\xi):=v^\alpha(x,\xi)-v^\alpha(x_0,\xi)$. Define
\begin{equation}\label{url}
\overline v(x,\xi)=\lim_{r\to 0} \sup\{\tilde v^\alpha(y,\xi)\:\:|\:\: |x-y|+\alpha \leq r\},
\end{equation}
and
\begin{equation}\label{lrl}
\underline v(x,\xi)=\lim_{r\to 0} \inf\{\tilde v^\alpha(y,\xi)\:\:|\:\: |x-y|+\alpha \leq r\}.
\end{equation}
Then, $\overline v$ and $\underline v$ are a subsolution and a supersolution, respectively,  to 
\begin{equation}\label{EEEEE}
H(x,Dv,\xi)+\int_I k(\xi,\eta)\{v(x,\xi)-v(x,\eta)\} \mbox{ }d\eta=c(\xi) \quad\rm{in}\  \Tt^d\times I.
\end{equation} 
\end{pro}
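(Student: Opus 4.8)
The plan is to run the Barles--Perthame half-relaxed limit method, with the only genuine difficulty concentrated in the passage to the limit inside the nonlocal coupling term $\Theta$. As preliminaries, I would record that by \eqref{esti} the family $v^\alpha$ is equi-Lipschitz in $x$, uniformly in $\alpha$ and $\xi$; since $\tilde v^\alpha(x_0,\xi)=0$, the functions $\tilde v^\alpha$ are in addition uniformly bounded. Hence $\overline v$ and $\underline v$ from \eqref{url}--\eqref{lrl} are finite, $\overline v(\cdot,\xi)$ is upper semicontinuous, $\underline v(\cdot,\xi)$ is lower semicontinuous, and both are Lipschitz in $x$ with the same constant as $v^\alpha$. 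A short measurability check — replacing the suprema/infima in \eqref{url}--\eqref{lrl} by countable ones over rational radii and a countable dense set of $y$, using continuity in $x$ — shows $\overline v(x,\cdot)$ and $\underline v(x,\cdot)$ are Borel measurable, so that $\overline v\in\mathcal{U}^+(\Tt^d\times I)$, $\underline v\in\mathcal{U}^-(\Tt^d\times I)$ and $\Theta\overline v$, $\Theta\underline v$ make sense. I will only treat $\overline v$; the statement for $\underline v$ is entirely symmetric.

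The extraction step is standard. Fix $\xi\in I$ and $\phi\in C^1(\Tt^d)$, and suppose $\overline v(\cdot,\xi)-\phi$ has a strict local maximum at $\hat x$. By the usual properties of the upper half-relaxed limit there are $\alpha_n\to0$ and $x_n\to\hat x$ such that $\tilde v^{\alpha_n}(\cdot,\xi)-\phi$ attains a local maximum at $x_n$ and $\tilde v^{\alpha_n}(x_n,\xi)\to\overline v(\hat x,\xi)$. Since $\tilde v^{\alpha_n}(\cdot,\xi)$ and $v^{\alpha_n}(\cdot,\xi)$ differ by the $x$-constant $v^{\alpha_n}(x_0,\xi)$, the map $v^{\alpha_n}(\cdot,\xi)-\phi$ also attains a local maximum at $x_n$, so the subsolution inequality for $v^{\alpha_n}$ gives
\begin{equation*}
\alpha_n v^{\alpha_n}(x_n,\xi)+H(x_n,D\phi(x_n),\xi)+\Theta v^{\alpha_n}(x_n,\xi)\le 0.
\end{equation*}

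The crux is to \emph{subtract off the value at} $x_0$, where hypothesis \eqref{tech} is available. Because $\Theta$ is linear and the shift by $v^{\alpha_n}(x_0,\cdot)$ is $\alpha$-independent in $x$, one has the identity
\begin{equation*}
\alpha_n v^{\alpha_n}(x_n,\xi)+\Theta v^{\alpha_n}(x_n,\xi)
=\big[\alpha_n v^{\alpha_n}(x_0,\xi)+\Theta v^{\alpha_n}(x_0,\xi)\big]
+\alpha_n\tilde v^{\alpha_n}(x_n,\xi)+\Theta\tilde v^{\alpha_n}(x_n,\xi),
\end{equation*}
where the bracket tends to $-c(\xi)$ by \eqref{tech} and $\alpha_n\tilde v^{\alpha_n}(x_n,\xi)\to0$ by the uniform bound. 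The remaining task is to bound $\Theta\tilde v^{\alpha_n}(x_n,\xi)$ from below, and this is the main obstacle: along the fixed sequence $\alpha_n$ I only know $\limsup_n\tilde v^{\alpha_n}(\hat x,\eta)\le\overline v(\hat x,\eta)$, not convergence. I would write, with $\overline{k}(\xi):=\int_I k(\xi,\eta)\,d\eta$,
\begin{equation*}
\Theta\tilde v^{\alpha_n}(x_n,\xi)=\overline{k}(\xi)\,\tilde v^{\alpha_n}(x_n,\xi)-\int_I k(\xi,\eta)\,\tilde v^{\alpha_n}(x_n,\eta)\,d\eta,
\end{equation*}
replace $x_n$ by $\hat x$ in the integral up to an $o(1)$ error uniform in $\eta$ (equi-Lipschitz bound), and apply the reverse Fatou lemma, legitimate since $k(\xi,\eta)\tilde v^{\alpha_n}(\hat x,\eta)$ is bounded above by a constant integrable on $I$. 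This yields $\limsup_n\int_I k(\xi,\eta)\tilde v^{\alpha_n}(\hat x,\eta)\,d\eta\le\int_I k(\xi,\eta)\overline v(\hat x,\eta)\,d\eta$, hence $\liminf_n\Theta\tilde v^{\alpha_n}(x_n,\xi)\ge\Theta\overline v(\hat x,\xi)$.

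Taking $\liminf$ in the subsolution inequality, using continuity of $H(\cdot,\cdot,\xi)$ so that $H(x_n,D\phi(x_n),\xi)\to H(\hat x,D\phi(\hat x),\xi)$, I obtain $H(\hat x,D\phi(\hat x),\xi)+\Theta\overline v(\hat x,\xi)\le c(\xi)$, the desired subsolution property. For $\underline v$ the identical computation with reversed inequalities and the ordinary Fatou lemma (bounding $k(\xi,\eta)\tilde v^{\alpha_n}(\hat x,\eta)$ from below) gives the supersolution inequality. I expect the sign bookkeeping in the Fatou step to be the only delicate point: one must ensure that the \emph{upper} relaxed limit pairs with the minus sign in front of the integral, so that the reverse Fatou inequality pushes in the favorable direction, and symmetrically for $\underline v$; the measurability of $\overline v(x,\cdot)$ and $\underline v(x,\cdot)$, needed merely so that $\Theta$ is defined on them, is a routine but non-omittable check.
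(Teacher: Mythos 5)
Your proof is correct and follows essentially the same route as the paper's: extract approximate maximizers $x_n$ for the half-relaxed limit, split $\alpha_n v^{\alpha_n}+\Theta v^{\alpha_n}$ at $x_n$ into the $x_0$-part (handled by \eqref{tech}) plus the $\tilde v^{\alpha_n}$-part, and pass to the limit in the coupling term by a (reverse) Fatou argument, exactly as in the paper. The only differences are cosmetic — you shift the integral from $x_n$ to $\hat x$ via the equi-Lipschitz bound before applying Fatou, whereas the paper applies Fatou at $x_n$ directly using the definition of $\overline v$ — and your explicit flagging of the Borel measurability of $\overline v(x,\cdot)$ (needed for $\Theta\overline v$ to make sense) addresses a point the paper passes over in silence.
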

\begin{proof}
Here, we only prove that $\overline v$ is a subsolution to \eqref{EEEEE}. Take $\xi \in I$, $\phi \in C^1(\Tt^d)$ and $\hat x \in \Tt^d$ such that $\overline v(\cdot,\xi)-\phi(\cdot)$ attains a strict maximum at $\hat x\in \Tt^d$. Let $\{y_j\}_{j\in \Nn}$ and $\{\alpha_j\}_{j\in \Nn}$ such that $y_j \to \hat x$, $\alpha_j \to 0$ and $\tilde v^{\alpha_j}(y_j,\xi) \to \overline v(\hat x,\xi)$.

Let $x_j \in \Tt^d$ be a point satisfying that $\max_{x\in \Tt^d}\tilde v^{\alpha_j}(x,\xi)-\phi(x)= \tilde v^{\alpha_j}(x_j,\xi)-\phi(x_j)$. We can choose a subsequence satisfying $x_j \to \overline x$, for some $\overline x \in \Tt^d$. Then, we get
\begin{align*}
\overline v(\hat x,\xi)-\phi(\hat x) 
&\leq \liminf_{j\to \infty}\tilde v^{\alpha_j}(y_j,\xi)-\phi(y_j) \leq \liminf_{j\to \infty}\tilde v^{\alpha_j}(x_j,\xi)-\phi(x_j)\\
&\leq \limsup_{j\to \infty}\tilde v^{\alpha_j}(x_j,\xi)-\phi(\overline x) \leq \overline v(\overline x,\xi)-\phi(\overline x).
\end{align*}
Thus, we have $\hat x=\overline x$ and $\lim_{j\to \infty} \tilde v^{\alpha_j}(x_j,\xi)=\overline v(\overline x,\xi)$.

Because $v^{\alpha_j}$ is a subsolution to \eqref{DP}, we have 
\[\alpha_j \tilde v^{\alpha_j}(x_j,\xi) +H(x_j,D\phi(x_j),\xi)+\alpha_j v^{\alpha_j}(x_0,\xi)+\Theta v^{\alpha_j}(x_0.\xi)\leq -\Theta \tilde v^{\alpha_j}(x_j,\xi). \]
In light of \eqref{esti}, $\tilde v^\alpha$ is bounded on $\Tt^d \times I$. Due to \eqref{tech} and the definition of $\{x_j\}_{j\in \Nn}$ and $\{\alpha_j\}_{j\in \Nn}$, we have
\begin{equation*}
 \alpha_j \tilde v^{\alpha_j}(x_j,\xi) +H(x_j,D\phi(x_j),\xi)+\alpha_j v^{\alpha_j}(x_0,\xi)+\Theta v^{\alpha_j}(x_0.\xi) \to H(\hat x,D\phi(\hat x),\xi)-c(\xi).
\end{equation*}
 By Fatou's lemma, we obtain
\begin{align*}
\limsup_{j\to \infty} \int_I k(\xi,\eta) \tilde v^{\alpha_j} (x_j,\eta)\mbox{ }d\eta \leq \int_I \limsup_{j\to \infty} k(\xi,\eta) \tilde v^{\alpha_j}  (x_j,\eta)\mbox{ }d\eta \leq \int_I k(\xi,\eta) \overline v(\hat x,\eta) \mbox{ }d\eta.
\end{align*}
Accordingly, it follows that
\[H(\hat x, D \phi(\hat x), \xi)+\int_I k(\xi,\eta)\{\overline v(\hat x,\xi)-\overline v(\hat x,\eta)\} \mbox{ }d\eta \leq c(\xi).\] \end{proof}

\begin{remark}\label{rem}
It is worth emphasizing that if $I$ is a countable set, we can construct a subsequence of $\{v^\alpha\}_{\alpha>0}$ satisfying \eqref{tech} by using a diagonal argument for $\xi$, due to a priori estimate given by \eqref{esti}. 
On the other hand, if $I$ is an uncountable set, because we do not have the equi-continuity of $\{v^\alpha\}_{\alpha>0}$ (particularly with respect to $\xi$), we do not know whether $v^\alpha -\min_{\Tt^d} v^\alpha$ converges on $\Tt^d \times I$ even along subsequences. Thus, as far as the author knows, the existence of solutions to the ergodic problem \eqref{EP} is not established yet in a general setting. In this paper, we give it in a specific Hamiltonian of the form $H(x,p,\xi)=F(x,p,\xi)-f(x,\xi)$ as described in Introduction.
\end{remark}

\section{convergence of the solution}\label{conv}
In what follows, we consider the Hamiltonian of the form \eqref{form}. Then, \eqref{DP}  and \eqref{EP} become
\begin{equation}\label{D}
\alpha v^\alpha(x,\xi)+F(x,Dv^\alpha(x,\xi),\xi)+\int_Ik(\xi,\eta)(v^\alpha(x,\xi)-v^\alpha(x,\eta))\mbox{ }d\eta=f(x,\xi) \quad\rm{in}\  \Tt^d\times I,
\end{equation}
and
\begin{equation}\label{E}
F(x,Dv(x,\xi),\xi)+\int_Ik(\xi,\eta)(v(x,\xi)-v(x,\eta))\mbox{ }d\eta=f(x,\xi)+c \quad\rm{in}\  \Tt^d\times I ,
\end{equation}
respectively. 
Firstly, we observe that $v^\alpha=0$ on $\mathcal{Z}$, defined in \eqref{uniquenessset}.
\begin{pro}\label{zero}
Suppose that Assumptions \ref{Coercive} - \ref{growth1} hold. Let $v^\alpha$ be the viscosity solution to \eqref{D}. Then, for all $(x,\xi) \in \mathcal{Z}$ and $\alpha>0$, we have $v^\alpha (x,\xi)=0$.
\end{pro}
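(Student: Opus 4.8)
The plan is to combine a one-sided comparison with a $\gamma$-weighted averaging identity. First I would establish the global lower bound $v^\alpha\geq 0$ on $\Tt^d\times I$. By Assumptions \ref{NR} and \ref{positive} the constant $0$ is a viscosity subsolution of \eqref{D}: if $0-\phi$ has a local maximum at $\hat x$ then $D\phi(\hat x)=0$, and $\alpha\cdot 0+H(\hat x,0,\xi)+\Theta 0=F(\hat x,0,\xi)-f(\hat x,\xi)=-f(\hat x,\xi)\leq 0$. Since $v^\alpha$ is a bounded supersolution, the comparison principle in Proposition \ref{new-comparison} yields $0\leq v^\alpha$ everywhere.

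Next I would prove that $v^\alpha(x_0,\cdot)=0$ almost everywhere on $I$ for every $x_0\in\mathcal{\tilde A}$. Because $v^\alpha(\cdot,\xi)$ is uniformly Lipschitz in $x$ by \eqref{esti} and $F$ is convex and coercive in $p$ (Assumptions \ref{convex}, \ref{Coercive}), for each fixed $\xi$ the subsolution inequality holds a.e. in $x$; dropping the nonnegative term $F$ (Assumption \ref{positive}) gives $\alpha v^\alpha(x,\xi)+\Theta v^\alpha(x,\xi)\leq f(x,\xi)$ for a.e. $(x,\xi)$. Integrating in $\xi$ against the weight $\gamma(\xi)/\overline k(\xi)$ and using \eqref{couple} to annihilate the coupling term, I obtain $\alpha\Phi(x)\leq\Phi_f(x)$ for a.e. $x$, where $\Phi(x):=\int_I\frac{\gamma(\xi)}{\overline k(\xi)}v^\alpha(x,\xi)\,d\xi$ and $\Phi_f(x):=\int_I\frac{\gamma(\xi)}{\overline k(\xi)}f(x,\xi)\,d\xi$. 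Both are continuous in $x$ (the first because $v^\alpha$ is uniformly Lipschitz and $\gamma/\overline k$ is bounded by \eqref{gammaesti}, the second by the modulus of $f$ in Assumption \ref{growth1}), so $\alpha\Phi\leq\Phi_f$ holds for all $x$. At $x_0\in\mathcal{\tilde A}$ one has $\Phi_f(x_0)=0$, hence $\Phi(x_0)\leq 0$; since the integrand is nonnegative and $\gamma/\overline k>0$, this forces $v^\alpha(x_0,\xi)=0$ for a.e. $\xi\in I$.

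Finally, to reach the distinguished component $\xi_0$ with $(x_0,\xi_0)\in\mathcal{Z}$, I would use the single-component equation together with the $x$-Lipschitz bound. For a.e. $x$, the component-$\xi_0$ subsolution inequality and $F\geq 0$ give $\alpha v^\alpha(x,\xi_0)+\Theta v^\alpha(x,\xi_0)\leq f(x,\xi_0)$. Writing $\Theta v^\alpha(x,\xi_0)=\overline k(\xi_0)v^\alpha(x,\xi_0)-\int_I k(\xi_0,\eta)v^\alpha(x,\eta)\,d\eta$, and using $v^\alpha(x,\eta)\leq v^\alpha(x_0,\eta)+C|x-x_0|$ with $v^\alpha(x_0,\eta)=0$ a.e. (the previous step), the last integral is at most $C\,\overline k(\xi_0)|x-x_0|$. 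Thus, for a.e. $x$, $(\alpha+\overline k(\xi_0))v^\alpha(x,\xi_0)\leq f(x,\xi_0)+C\,\overline k(\xi_0)|x-x_0|$; both sides are continuous in $x$, so the inequality holds at $x=x_0$, where $f(x_0,\xi_0)=0$. Hence $(\alpha+\overline k(\xi_0))v^\alpha(x_0,\xi_0)\leq 0$, and with the lower bound $v^\alpha(x_0,\xi_0)=0$.

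The main obstacle is precisely that $\mathcal{\tilde A}$ and the section $\xi_0$ may be Lebesgue-null, so no statement valid merely a.e. in $x$ or a.e. in $\xi$ can be evaluated there directly. The resolving device is, in both the averaged step and the single-component step, to reduce to scalar inequalities whose two sides are continuous in $x$ and then upgrade from a.e. to everywhere by continuity; the absence of any regularity in $\xi$ (we only have $v^\alpha\in C(\Tt^d)\otimes\mathcal{B}(I)$) is sidestepped by exploiting the coupling structure and the uniform $x$-Lipschitz bound \eqref{esti} rather than continuity in $\xi$.
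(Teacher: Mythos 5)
Your proposal is correct, and its skeleton is the same as the paper's: nonnegativity of $v^\alpha$ by comparing with the subsolution $0$ via Proposition \ref{new-comparison}; the $\gamma(\xi)/\overline k(\xi)$-weighted integration in $\xi$, with \eqref{couple} annihilating the coupling term, to get $v^\alpha(x_0,\cdot)=0$ a.e.\ on $I$ for $x_0\in\mathcal{\tilde A}$; and a final single-component step to cover all of $\mathcal{Z}$. The genuine difference is the device used to produce the key inequality $\alpha v^\alpha+\Theta v^\alpha\leq f$. The paper mollifies $v^\alpha$ in $x$ and uses convexity \ref{convex} via Jensen's inequality, together with the modulus \ref{growth1}, to obtain \eqref{subpro} at \emph{every} $(x,\xi)$ before integrating; you instead get the inequality only a.e.\ in $x$ for each fixed $\xi$ (Rademacher plus the superdifferential characterization of subsolutions, using \eqref{esti} and $F\geq 0$), and then upgrade the two scalar inequalities you actually need --- the $\gamma$-averaged one, and the component-$\xi_0$ one after bounding the coupling integral by $C\,\overline k(\xi_0)|x-x_0|$ --- from a.e.\ to everywhere by continuity in $x$. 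What your route buys is that convexity of $F$ in $p$ is never actually used (you cite \ref{convex} but do not need it), so Proposition \ref{zero} holds without that assumption, whereas the paper's mollification argument relies on it; the paper of course still needs \ref{convex} elsewhere, e.g.\ in Theorem \ref{SCR}. What your route costs is two technical layers that should be made explicit: (i) passing between ``for each $\xi$, a.e.\ $x$'' and ``for a.e.\ $x$, a.e.\ $\xi$'' is a Fubini argument and needs joint measurability of $(x,\xi)\mapsto \alpha v^\alpha+\Theta v^\alpha-f$, which does hold here because functions continuous in $x$ for each $\xi$ and measurable in $\xi$ for each $x$ are jointly measurable; (ii) the continuity in $x$ of $\Phi$, $\Phi_f$ and of $\Theta v^\alpha(\cdot,\xi_0)$. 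In fact step (i) can be avoided altogether: for each fixed $\xi$, all three terms $\alpha v^\alpha(\cdot,\xi)$, $\Theta v^\alpha(\cdot,\xi)$ (by dominated convergence and \eqref{esti}) and $f(\cdot,\xi)$ are already continuous in $x$, so your a.e.-in-$x$ inequality upgrades directly to \eqref{subpro} holding everywhere, after which the paper's remaining computation applies verbatim and both your Fubini step and the $C\,\overline k(\xi_0)|x-x_0|$ estimate become unnecessary.
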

\begin{proof}

Let $v^\alpha$ be the viscosity solutions of \eqref{D}. For $\delta>0$, set 
\begin{equation*}
v^\alpha_\delta(x,\xi):=\gamma^\delta*v^\alpha(x,\xi)=\int_{B(0,\delta)} \gamma^\delta(y)v^\alpha(x-y,\xi)\mbox{ }dy,
\end{equation*}
 where $\gamma^\delta(y):=\delta^{-d}\gamma(\delta^{-1}y)$ for $y\in \Tt^d$ and $\gamma$ is a standard mollifier.
 In view of \eqref{esti}, there exists a constant $R>\|Dv^\alpha\|_{L^\infty(\Tt^d \times I)}$.
 Due to Assumption \ref{convex} - \ref{growth1} and Jensen's inequality, for all $(x,\xi)\in \Tt^d\times I$, we get 
 \begin{align*}
 0&\leq F(x,Dv^\alpha_\delta(x,\xi),\xi)\\
 &=F\left(x,\int_{B(0,\delta)}\gamma^\delta(y)Dv^\alpha(x-y,\xi)\mbox{ }dy,\xi\right)\\
 &\leq \int_{B(0,\delta)}F(x,Dv^\alpha(x-y,\xi),\xi)\gamma^\delta(y)\mbox{ }dy \\
 &\leq \int_{B(0,\delta)}F(x-y,Dv^\alpha(x-y,\xi),\xi)\gamma^\delta(y)\mbox{ }dy+\omega_R (\delta) \\
 &\leq \int_{B(0,\delta)}-\gamma^\delta(y)  \left\{ \alpha v^\alpha(x-y,\xi)+\Theta v^\alpha(x-y,\xi)-f(x-y,\xi)   \right\}  \mbox{ }dy +\omega_R(\delta).
 \end{align*}
Thanks to the Lipschitz estimate in \eqref{esti}, sending $\delta \to 0$, we observe that 
\begin{equation}\label{subpro}
\alpha v^\alpha(x,\xi)+\Theta v^\alpha(x,\xi) \leq f(x,\xi).
\end{equation}
Let $x\in \mathcal{ \tilde A}$, defined in \eqref{tildeA}. Let $\gamma(\xi)$ and $\overline k(\xi)$ be defined in Section \ref{gamma}. Multiply \eqref{subpro} by $\gamma(\xi)/\overline k(\xi)$, and integrate over $I$, to yield
\begin{equation*}
\int_I \alpha v^\alpha(x,\xi) \frac{\gamma(\xi)}{\overline k(\xi)}\mbox{ }d\xi+\int_I \frac{\gamma(\xi)}{\overline k(\xi)} \Theta v^\alpha(x,\xi)\mbox{ }d\xi \leq  \int_I \frac{\gamma(\xi)}{\overline k(\xi)} f(x,\xi) \mbox{ }d\xi=0.
\end{equation*}
 In light of \eqref{couple},
we have
\begin{equation*}
\int_I \alpha v^\alpha(x,\xi) \frac{\gamma(\xi)}{\overline k(\xi)} \mbox{ }d\xi \leq 0.
\end{equation*}
Because $0$ is a subsolution to \eqref{D}, due to Proposition \ref{new-comparison}, $v^\alpha$ is nonnegative. Moreover, because both $\gamma$ and $\overline k$ are positive, $v^\alpha(x,\xi)=0$ for almost all $\xi \in I$. By \eqref{subpro} again, for all $(x,\xi)\in \mathcal{Z}$, we get
\begin{equation*}
\big( \alpha+\int_I k(\xi,\eta)\mbox{ }d\eta \big)v^\alpha(x,\xi) \leq0,
\end{equation*}
which implies $v^\alpha(x,\xi)=0$ for all $(x,\xi) \in \mathcal{Z}$.
\end{proof}

Hence, $v^\alpha$ uniformly converges to $0$ in $\mathcal{Z}$. 
Next, we show a comparison principle for \eqref{E} in terms of  $\mathcal{Z}$.

\begin{teo}\label{SCR}
Suppose that Assumptions \ref{Coercive}-\ref{growth1} hold. Let $u\in \mathcal{U}^+(\Tt^d \times I)$ and $v\in \mathcal{U}^-(\Tt^d\times I)$ be a viscosity subsolution and a viscosity supersolution to \eqref{E} with $c=0$, respectively. Assume that both $u$ and $-v$ are bounded above on $\Tt^d \times I$. If $u\leq v$ in $\mathcal{Z}$,
then $u\leq v$ in $\Tt^d\times I$.
\end{teo}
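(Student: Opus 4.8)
The plan is to argue by contradiction, assuming $\lambda:=\sup_{\Tt^d\times I}(u-v)>0$, and to combine the Namah--Roquejoffre scaling trick with the doubling argument of Proposition \ref{new-comparison} so as to push the (near-)maximum of $u-v$ onto the set $\mathcal Z$, where the hypothesis $u\le v$ applies. First I would exploit the special structure of the Hamiltonian through a convex scaling. Fix $s\in(0,1)$ and set $w:=su$. Using convexity of $p\mapsto F(x,p,\xi)$ together with $F\ge0$ and $F(x,0,\xi)=0$ (Assumptions \ref{convex} and \ref{positive}), a test-function computation gives $F(x,sp,\xi)\le sF(x,p,\xi)$, so that $w$ is again a viscosity subsolution of \eqref{E} with $c=0$, but with the sharper right-hand side $sf$:
\[ F(x,Dw,\xi)+\Theta w(x,\xi)\le sf(x,\xi)\quad\text{in }\Tt^d\times I. \]
Since $f\ge0$ (Assumption \ref{NR}), $sf\le f$, with a strict gain $(1-s)f$ wherever $f>0$; this built-in strictness is what will be contradicted. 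Note also that $\lambda_s:=\sup(w-v)\ge\lambda-(1-s)\sup u$, so $\lambda_s\to\lambda$ as $s\to1$, and it will suffice to bound $\lambda_s$ by a quantity that vanishes as $s\to1$.

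Next I would run the doubling procedure. With $\Phi(x,y,\xi):=w(x,\xi)-v(y,\xi)-|x-y|^2/2\epsi$ and a near-maximizer $(\hat x_\epsi,\hat y_\epsi,\hat\xi_\epsi)$, the subsolution inequality for $w$ and the supersolution inequality for $v$ at the common slope $p_\epsi:=(\hat x_\epsi-\hat y_\epsi)/\epsi$ give, after subtraction,
\[ F(\hat x_\epsi,p_\epsi,\hat\xi_\epsi)-F(\hat y_\epsi,p_\epsi,\hat\xi_\epsi)+\Theta w(\hat x_\epsi,\hat\xi_\epsi)-\Theta v(\hat y_\epsi,\hat\xi_\epsi)\le sf(\hat x_\epsi,\hat\xi_\epsi)-f(\hat y_\epsi,\hat\xi_\epsi). \]
The coupling difference is bounded below by $-\epsi k_1$ exactly as in Proposition \ref{new-comparison}, using that $\hat\xi_\epsi$ is a near-maximizer in $\xi$. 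Coercivity (Assumption \ref{Coercive}), together with the lower bound on $w(\hat x_\epsi,\hat\xi_\epsi)$ coming from $\lambda_s>0$ and the lower bound on $v$, and the upper bound on $u$, keeps $|p_\epsi|$ bounded, so the $F$-difference and the $f$-difference are controlled by the moduli $\omega_R,\omega$ of Assumption \ref{growth1}. Rearranging and using the $x$-modulus of $f$ yields $(1-s)f(\hat x_\epsi,\hat\xi_\epsi)\le \omega_R(|\hat x_\epsi-\hat y_\epsi|)+\omega(|\hat x_\epsi-\hat y_\epsi|)+\epsi k_1$, whence, as $\epsi\to0$, one gets $f(\hat x_\epsi,\hat\xi_\epsi)\to0$ while $w(\hat x_\epsi,\hat\xi_\epsi)-v(\hat y_\epsi,\hat\xi_\epsi)\to\lambda_s$ and $\hat x_\epsi,\hat y_\epsi\to\bar x$ for some $\bar x\in\Tt^d$.

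The main obstacle is the final step: upgrading ``$f$ is small at the tracked $\xi$'' to membership of the limiting maximum point in $\mathcal Z$. Because $f$ is only measurable in $\xi$, one cannot simply pass to the limit in $f(\hat x_\epsi,\hat\xi_\epsi)$, and moreover $\mathcal Z$ requires $\bar x\in\mathcal{\tilde A}$, i.e.\ $f(\bar x,\cdot)=0$ for \emph{a.e.}\ $\xi$, not merely along the selected sequence. This is precisely the difficulty flagged in Remark \ref{rem}, and it is where the averaging machinery of Section \ref{gamma} must enter. I would combine the pointwise subsolution bound $\Theta u(x,\xi)\le f(x,\xi)$ (obtained by the mollification and Jensen argument of Proposition \ref{zero}, valid for a.e.\ $\xi$) with integration against the weight $\gamma(\xi)/\overline k(\xi)$ and the cancellation identity \eqref{couple}, together with a careful choice of the $\xi$-near-maximizers inside the co-null set $\{\,\xi : f(\bar x,\xi)=0\,\}$, to conclude that $\lambda_s\le\sup_{\mathcal Z}(w-v)$.

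Granting this, the conclusion is immediate: on $\mathcal Z$ the hypothesis $u\le v$ gives $w-v=su-v\le (s-1)v\le(1-s)\sup_{\Tt^d\times I}(-v)$, so $\lambda_s\le(1-s)\sup(-v)$. Combined with $\lambda_s\ge\lambda-(1-s)\sup u$ and letting $s\to1$, this forces $\lambda\le0$, contradicting $\lambda>0$; hence $u\le v$ on $\Tt^d\times I$. The delicate measure-theoretic localization in the third paragraph is the heart of the proof, while the scaling and doubling steps are the structural backbone that makes the strictness $(1-s)f$ available.
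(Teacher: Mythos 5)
Your structural backbone (the convex scaling $w=su$ and the doubling in $x$) is the same as the paper's, which works with $\mu u-v$ for $0<\mu<1$; but there is a genuine gap exactly at the step you yourself flag as ``the heart of the proof'', and the fix you sketch does not close it. Your doubling argument discards the coupling term through the crude bound $\Theta w-\Theta v\geq -\epsi k_1$, and what survives is only that $f(\hat x_\epsi,\hat\xi_\epsi)\to 0$ along the selected sequence. Since $f(\cdot,\xi)$ has a modulus uniform in $\xi$, this gives $f(\bar x,\hat\xi_\epsi)\to 0$, but the $\hat\xi_\epsi$ form a null set, so this says nothing about whether $\bar x\in\mathcal{\tilde A}$; and without $\bar x\in\mathcal{\tilde A}$ the points $(\bar x,\hat\xi_\epsi)$ do not belong to $\mathcal{Z}$ even where $f$ vanishes, because $\mathcal{Z}$ is by definition contained in $\mathcal{\tilde A}\times I$ and is strictly smaller than the zero set of $f$. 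Your proposed repair is circular on this very point: you speak of choosing $\xi$-near-maximizers ``inside the co-null set $\{\xi\::\: f(\bar x,\xi)=0\}$'', which presupposes $\bar x\in\mathcal{\tilde A}$, i.e.\ exactly what has to be proved. Nor can the averaging machinery of Section \ref{gamma} do this job as you invoke it: integrating the subsolution bound $\Theta u\leq f$ against $\gamma/\overline k$ and using \eqref{couple} yields only $0\leq\int_I (\gamma(\xi)/\overline k(\xi))f(x,\xi)\,d\xi$, which is vacuous since $f\geq 0$. In Proposition \ref{zero} that computation bites only because of the extra discount term $\alpha v^\alpha$ together with $v^\alpha\geq 0$ (comparison with the subsolution $0$), and neither ingredient is available for a subsolution of the ergodic equation \eqref{E}.

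The paper closes this gap by \emph{not} throwing the coupling term away. Fixing a near-maximizing $\xi_\epsi$ and doubling only in $x$, it bounds $\mu\Theta u-\Theta v$ from below by $k_0\int_I\big[M_\mu-(\mu u-v)(x_0,\eta)\big]\,d\eta-\epsi k_1-o_\delta(1)$ via Fatou's lemma, which after the Hamiltonian estimates produces $\int_I\big[M_\mu-(\mu u-v)(x_0,\eta)\big]\,d\eta\leq 0$: the supremum $M_\mu$ is attained at the limit point $x_0$ for a.e.\ $\xi$. This is precisely the quantitative $\xi$-localization your argument lacks. From there the paper runs a three-case analysis: if $x_0\in\mathcal{\tilde A}$, the hypothesis $u\leq v$ on $\mathcal{Z}$ applies for a.e.\ $\xi$ and forces $M_\mu\leq(1-\mu)\int_I(-v)(x_0,\eta)\,d\eta$, which is incompatible with $M_\mu\to M_1>0$ as $\mu\to1$; if $x_0\notin\mathcal{\tilde A}$ and the maximum set $E$ is not co-null, the displayed integral inequality is itself contradicted; and if $x_0\notin\mathcal{\tilde A}$ with $|I-E|=0$, one picks $\hat\xi\in E$ with $f(x_0,\hat\xi)>0$ and reruns the doubling at that fixed $\hat\xi$, where the strict gain $(\mu-1)f(x_0,\hat\xi)<0$ (the same strictness you built in with $(1-s)f$) contradicts maximality. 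So your scaling and doubling steps are sound and match the paper, but the decisive measure-theoretic step must come from Fatou's lemma applied to the coupling term together with this case analysis, not from \eqref{couple}.
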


\begin{proof}
Let $0< \mu <1$. Suppose that 
\[M_\mu:=\sup_{(x,\xi)\in I} \{\mu u(x,\xi)-v(x,\xi)\}>0.\]
For each $\epsi>0$, take $(x_\epsi,\xi_\epsi)\in \Tt^d \times I$ satisfying that
$M_\mu-\epsi \leq \mu u(x_\epsi,\xi_\epsi)-v(x_\epsi,\xi_\epsi)$.
Moreover, we can choose subsequences satisfying $x_\epsi \to x_0$ as $\epsi \to 0$ and set 
\[E:=\{\xi \in I \:\:| \:\: (\mu u-v)(x_0,\xi)=M_\mu \}.\]
Then, we consider the following three cases:
\begin{enumerate}[label=Case \arabic*:]
\item $x_0 \in \mathcal{\tilde A}$.

\item $x_0 \not \in \mathcal{\tilde A}$ and $|I-E|>0$.

\item $x_0 \not \in \mathcal{\tilde A}$ and $|I-E|=0$.
\end{enumerate}

Case 1. First, we consider the case $x_0\in \mathcal{\tilde A}$. Take $\delta>0$. Set
\[\Phi(x,y):= \mu u(x,\xi_\epsi)-v(y,\xi_\epsi)-\frac{|x-y|^2}{2\delta}-|x-x_\epsi|^2,\]
and its maximum is attained at $(x^\delta_\epsi, y^\delta_\epsi)$. Then, we have
\begin{equation}\label{rrr}
\Phi(x^\delta_\epsi,y^\delta_\epsi)\geq \Phi(x_\epsi,x_\epsi)\geq M_\mu-\epsi,
\end{equation}
in particular,
\begin{equation}\label{mmmm}
\frac{|x^\delta_\epsi-y^\delta_\epsi|^2}{2\delta}+|x^\delta_\epsi-x_\epsi|^2 \leq \mu u(x^\delta_\epsi,\xi_\epsi)-v(y^\delta_\epsi,\xi_\epsi)+\epsi-M_\mu.
\end{equation}
Because $u$ and $-v$ are bounded above, we can choose subsequences such that $x^\delta_\epsi \to \hat x_\epsi$ and $y^\delta_\epsi \to \hat x_\epsi$, for a common $\hat x_\epsi$ as $\delta \to 0$. Moreover, by \eqref{mmmm}, we get 
\begin{align*}
|\hat x_\epsi -x_\epsi|^2&\leq\limsup_{\delta \to 0} |x^\delta_\epsi-x_\epsi|^2 \\
&\leq \limsup_{\delta \to 0} \big( \mu u(x^\delta_\epsi,\xi_\epsi)-v(y^\delta_\epsi,\xi_\epsi) \big)+\epsi-M_\mu\\
&\leq \mu u(\hat x_\epsi,\xi_\epsi)-v(\hat x_\epsi,\xi_\epsi)+\epsi-M_\mu \leq \epsi,
\end{align*} 
which implies $\hat x_\epsi \to x_0$ as $\epsi \to 0$. 

We set $p^\delta_\epsi=\frac{x^\delta_\epsi-y^\delta_\epsi}{\delta}$. By the definition of viscosity solutions, we get

\begin{equation}\label{sub2}
\mu F(x^\delta_\epsi,\frac{p^\delta_\epsi+2(x^\delta_\epsi- x_\epsi)}{\mu}, \xi_\epsi)+\mu \Theta u(x^\delta_\epsi,\xi_\epsi)\leq \mu f(x^\delta_\epsi, \xi_\epsi),
\end{equation}
and
\begin{equation}\label{sup2}
F(y^\delta_\epsi,p^\delta_\epsi, \xi_\epsi)+\Theta v(y^\delta_\epsi,\xi_\epsi)\geq  f(y^\delta_\epsi, \xi_\epsi).
\end{equation}
In view of \ref{Coercive}, \eqref{sub2} implies $p^\delta_\epsi$ is bounded uniformly with respect to $\epsi$ and $\delta$. Subtracting \eqref{sub2} and \eqref{sup2} each other, to yield
\begin{equation}\label{gggg}
\mu F(x^\delta_\epsi,\frac{p^\delta_\epsi+2(x^\delta_\epsi-x_\epsi)}{\mu},  \xi_\epsi)-F(y^\delta_\epsi,p^\delta_\epsi, \xi_\epsi) +\mu \Theta u(x^\delta_\epsi,\xi_\epsi) -\Theta v(y^\delta_\epsi, \xi_\epsi) \leq \mu f(x^\delta_\epsi \xi_\epsi) - f(y^\delta_\epsi,\xi_\epsi).
\end{equation}
Let $R>\max\{|p^\delta_\epsi|,|\frac{2(x^\delta_\epsi-x_\epsi)}{\mu}|\}$. Due to \ref{convex} and \ref{growth1},  we have
\begin{align*}
&\mu F(x^\delta_\epsi,\frac{p^\delta_\epsi+2(x^\delta_\epsi-x_\epsi)}{\mu},  \xi_\epsi)-F(y^\delta_\epsi,p^\delta_\epsi, \xi_\epsi)\\
&\geq \mu F(x^\delta_\epsi,\frac{p^\delta_\epsi+2(x^\delta_\epsi-x_\epsi)}{\mu},  \xi_\epsi)-F(x^\delta_\epsi,p^\delta_\epsi, \xi_\epsi)-\omega_R(|x^\delta_\epsi-y^\delta_\epsi|)\\
&\geq -(1-\mu)F(x^\delta_\epsi,-\frac{2(x^\delta_\epsi-x_\epsi)}{1-\mu},\xi_\epsi)-\omega_R(|x^\delta_\epsi-y^\delta_\epsi|).
\end{align*}
 In light of \ref{positive} and \ref{growth1}, we get
\begin{align*}
&-(1-\mu)F(x^\delta_\epsi,-\frac{2(x^\delta_\epsi-x_\epsi)}{1-\mu},\xi_\epsi)-\omega_R(|x^\delta_\epsi-y^\delta_\epsi|) \\
&\geq -(1-\mu)F(x^\delta_\epsi,0,\xi_\epsi) -(1-\mu)\omega_R\left(\left|\frac{2(x^\delta_\epsi-x_\epsi)}{1-\mu}\right|\right)-\omega_R(|x^\delta_\epsi-y^\delta_\epsi|)\\
&=-(1-\mu)\omega_R\left(\left|\frac{2(x^\delta_\epsi-x_\epsi)}{1-\mu}\right|\right)-\omega_R(|x^\delta_\epsi-y^\delta_\epsi|).
\end{align*}
Thus, by \eqref{gggg}, we obtain
\begin{align}\label{aaaa}
& \mu \Theta u(x^\delta_\epsi, \xi_\epsi)-\Theta v(y^\delta_\epsi,\xi_\epsi)\notag \\
&\leq \mu f(x^\delta_\epsi,\xi_\epsi) -f(y^\delta_\epsi,\xi_\epsi)+(1-\mu)\omega_R\left(\left|\frac{2(x^\delta_\epsi-x_\epsi)}{1-\mu}\right|\right)+\omega_R(|x^\delta_\epsi-y^\delta_\epsi|)\notag \\
&\leq (\mu-1) f(x^\delta_\epsi,\xi_\epsi)+\omega(|x^\delta_\epsi-y^\delta_\epsi|)+(1-\mu)\omega_R\left(\left|\frac{2(x^\delta_\epsi-x_\epsi)}{1-\mu}\right|\right)+\omega_R(|x^\delta_\epsi-y^\delta_\epsi|)\notag \\
&\leq \omega(|x^\delta_\epsi-y^\delta_\epsi|)+(1-\mu)\omega_R\left(\left|\frac{2(x^\delta_\epsi-x_\epsi)}{1-\mu}\right|\right)+\omega_R(|x^\delta_\epsi-y^\delta_\epsi|) .
\end{align} 

On the other hand, using Fatou's lemma, we have
\begin{align}\label{ppppp}
\limsup_{\delta \to 0} \int_I k(\xi_\epsi,\eta) \big(\mu u(x^\delta_\epsi,\eta) -v(y^\delta_\epsi,\eta) \big)\mbox{ }d\eta 
&\leq \int_I k( \xi_\epsi,\eta)  \big( \mu u(\hat x_\epsi,\eta)-v(\hat x_\epsi,\eta) \big) \mbox{ }d\eta,
\end{align}
and by \eqref{rrr},
\begin{equation}\label{qqqqq}
\int_I k(\xi_\epsi,\eta) \big(\mu u(x^\delta_\epsi,\xi_\epsi) -v(y^\delta_\epsi,\xi_\epsi) \big)\mbox{ }d\eta 
\geq (M_\mu-\epsi) \int_I k(\xi_\epsi,\eta) \mbox{ }d\eta.
\end{equation}
Combine \eqref{ppppp} and \eqref{qqqqq}, to yield
\begin{align*}
\mu \Theta u(x^\delta_\epsi, \xi_\epsi)-\Theta v(y^\delta_\epsi,\xi_\epsi)
\geq  k_0 \int_I M_\mu- \big( \mu u(\hat x_\epsi,\eta)-v(\hat x_\epsi,\eta) \big) \mbox{ }d\eta-\epsi k_1-o_\delta(1).
\end{align*}
By \eqref{aaaa}, sending $\delta \to 0$, we get
\begin{align*}
(1-\mu)\omega_R\left(\left|\frac{2(\hat x_\epsi-x_\epsi)}{1-\mu}\right|\right) \geq  k_0 \int_I M_\mu- \big( \mu u(\hat x_\epsi,\eta)-v(\hat x_\epsi,\eta) \big) \mbox{ }d\eta-\epsi k_1.
\end{align*}
Using Fatou's lemma, as $\epsi \to 0$, we obtain
\begin{align}\label{kkkkk}
0\geq \int_I M_\mu - \big( \mu u( x_0,\eta)-v(x_0,\eta) \big) \mbox{ }d\eta.
\end{align}
It follows from $x_0 \in \mathcal{\tilde A}$ and $u\leq v$ in $\mathcal{Z}$ that  $u(x_0,\xi)\leq v(x_0,\xi)$ for almost all $\xi \in I$. Thus, we obtain
\begin{align*}
M_\mu \leq (1-\mu) \int_I -v(x_0,\eta)\mbox{ }d\eta,
\end{align*}
which contradicts $M_1>0$.


Case 2. Next, we consider the case $x_0 \not \in \mathcal{\tilde A}$ and $|I-E| > 0$. Then, there exists $\hat E \subset E^c$ satisfying $|\hat E|>0$ and 
\[ \sup_{\xi \in \hat E}\{\mu u(x_0,\xi)-v(x_0,\xi)\}<M_\mu.\] 
Set 
\[\rho := M_\mu-\sup_{\xi \in \hat E}\{\mu u(x_0,\xi)-v(x_0,\xi)\}.\]

Repeating the same argument in the Case 1, we have \eqref{kkkkk}. Then, we observe
\begin{align*}
0\geq \int_I M_\mu - \big( \mu u( x_0,\eta)-v(x_0,\eta) \big) \mbox{ }d\eta \geq \rho|\hat E|,
\end{align*}
which is a contradiction.


Case 3. We consider the case $x_0 \not \in \mathcal{\tilde A}$ and $|I-E|=0$. Then, there exists $J \subset I$ such that $|J|>0$ and $f(x_0,\xi)>0$ for all $\xi \in J$. Thus, there exists $\hat \xi \in E \cap J$.
Take $\beta>0$ and consider 
\[\Psi(x,y)=\mu u(x,\hat \xi)-v(y,\hat \xi)-\frac{|x-y|^2}{2\beta}-|x-x_0|^2, \]
and its maximum is attained  at  $(x_\beta,y_\beta)$.
Note that $\Psi(x_\beta,y_\beta)\geq \Psi(x_0,x_0)=M_\mu$, in particular,
\begin{equation*}
\frac{|x_\beta-y_\beta|^2}{2\beta}+|x_\beta-x_0|^2 \leq \mu u(x_\beta,\hat \xi)-v(y_\beta,\hat \xi)-M_\mu.
\end{equation*}
Thus, we can choose subsequences such that 
\[x_\beta \to x_0,\quad y_\beta \to x_0, \quad \frac{|x_\beta-y_\beta|^2}{2\beta} \to 0.\]
We set $p_\beta=\frac{x_\beta-y_\beta}{\beta}$. By the definition of viscosity solutions, we get

\begin{equation}\label{sub3}
\mu F(x_\beta,\frac{p_\beta+2(x_\beta- x_0)}{\mu}, \hat \xi)+\mu \Theta u(x_\beta,\hat \xi)\leq \mu f(x_\beta,\hat \xi)
\end{equation}
and
\begin{equation}\label{sup3}
F(y_\beta,p_\beta, \hat \xi)+\Theta v(y_\beta,\hat \xi)\geq  f(y_\beta,\hat \xi).
\end{equation}
In view of Assumption \ref{Coercive}, \eqref{sub3} implies $p_\beta$ is bounded uniformly in $\beta$. Subtracting \eqref{sub3} and \eqref{sup3} each other, to yield
\begin{equation*}
\mu F(x_\beta,\frac{p_\beta+2(x_\beta-x_0)}{\mu}, \hat \xi)-F(y_\beta,p_\beta, \hat \xi) +\mu \Theta u(x_\beta,\hat \xi) -\Theta v(y_\beta,\hat \xi) \leq \mu f(x_\beta,\hat \xi) - f(y_\beta,\hat \xi).
\end{equation*}
Due to Assumptions \ref{convex}-\ref{growth1}, as $\beta \to 0$, we have
\begin{align*}
\mu F(x_\beta,\frac{p_\beta+2(x_\beta-x_0)}{\mu}, \hat \xi)-F(x_\beta,p_\beta, \hat \xi)
&\geq -(1-\mu)F(x_\beta,-\frac{2(x_\beta-x_0)}{1-\mu},\hat \xi)\\
& \to -(1-\mu)F(x_0, 0, \hat \xi)=0.
\end{align*}
Thus, we get
\begin{align}\label{AAAA}
 \mu \Theta u(x_\beta,\hat \xi)-\Theta v(y_\beta,\hat \xi) 
&\leq \mu f(x_\beta,\hat \xi) -f(y_\beta,\hat \xi)+o_\beta(1)\notag \\
&\leq (\mu-1) f(x_0,\hat \xi)+o_\beta(1) .
\end{align}
On the other hand, using Fatou's lemma, we get
\begin{align*}
\limsup_{\beta \to 0} \int_I k(\hat \xi,\eta) \big(\mu u(x_\beta,\eta) -v(y_\beta,\eta) \big)\mbox{ }d\eta 
&\leq \int_I k(\hat \xi,\eta)  \big( \mu u(x_0,\eta)-v(x_0,\eta) \big) \mbox{ }d\eta  \\
&\leq M_\mu  \int_I k(\hat \xi,\eta)\mbox{ }d\eta.
\end{align*}
Moreover, because $\Psi (x_\beta,y_\beta)\geq M_\mu$, it holds
\[\int_I k(\hat \xi,\eta)\{ u(x_\beta,\hat \xi)-v(y_\beta,\hat \xi)\}\mbox{ }d\eta \geq M_\mu  \int_I k(\hat \xi,\eta)\mbox{ }d\eta .\]
Thus, we obtain 
\[-o_\beta(1)\leq  \mu \Theta u(x_\beta,\hat \xi)-\Theta v(y_\beta,\hat \xi), \]
 which contradicts \eqref{AAAA}.
\end{proof}

Finally, we prove the main result in this paper.

\begin{proof}[Proof of Theorem \ref{result1}]
Take $x_0 \in \mathcal{A}$. 
Define $\overline v$ and $\underline v$ as \eqref{url} and \eqref{lrl}, respectively.  In light of Proposition \ref{zero}, for all $\xi \in I$ and $\alpha>0$, we get
\begin{equation*}
\alpha v^\alpha(x_0,\xi)+\Theta v^\alpha(x_0,\xi)=0,
\end{equation*}  which tells us that \eqref{tech} holds with $c(\xi)\equiv0$. By Proposition \ref{stability}, $\overline v$ and $\underline v$ are a subsolution and a supersolution to \eqref{E} with $c=0$, respectively.

By the definition, we see $\overline v \geq \underline v$ in $\Tt^d\times I$.
On the other hand, Proposition \ref{zero} and \eqref{esti} imply $\overline v = \underline v=0$ in $\mathcal{Z}$. Then, Theorem \ref{SCR} guarantees that $\overline v \leq \underline v$ in $\Tt^d\times I$. Hence, $\overline v = \underline v=:v$ in $\Tt^d\times I$, and $v\in C(\Tt^d)\otimes \mathcal{B}(I)$.
By the basic property of half-relaxed limits, for each $\xi \in I$, we can see $v^\alpha(\cdot,\xi)\to v(\cdot, \xi)$ uniformly in $\Tt^d$ as $\alpha \to 0$. 
\end{proof}

\section{appendix}

Here, we study the discount approximation with the conditions that  $F\in C(\Tt^d \times \Rr^d \times I)$, $f\in C(\Tt^d \times I)$ and $k\in C(I\times I)$.  Then, as in \cite[Theorem 4.5]{Ishiishimano}, the unique viscosity solution $v^\alpha$ to \eqref{DP} belongs to $C(\Tt^d\times I)$. The aim of this Appendix is to prove the following refined asymptotics.

\begin{theorem}\label{result2}
Let $I\subset \Rr$ be closed and finite interval. Suppose that \ref{Coercive}-\ref{growth1} hold. Assume that $F\in C(\Tt^d \times \Rr^d \times I)$, $f\in C(\Tt^d \times I)$ and $k\in C(I\times I)$. Let $v^\alpha \in C(\Tt^d \times I)$ be the viscosity solution to \eqref{DP}. Then, there exists $v\in C(\Tt^d \times I)$, which is a viscosity solution to \eqref{EP} with $c=0$, satisfying that $v^\alpha \to v$ uniformly on $\Tt^d \times I$ as $\alpha \to 0$.

\end{theorem}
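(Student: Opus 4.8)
The plan is to promote the per-slice convergence of Theorem \ref{result1} to genuine uniform convergence on $\Tt^d\times I$, and to upgrade $v$ from $C(\Tt^d)\otimes\mathcal B(I)$ to $C(\Tt^d\times I)$, by establishing that the family $\{v^\alpha\}_{\alpha>0}$ is \emph{jointly} equi-continuous and then combining Arzel\`a--Ascoli with uniqueness of the limit. Since $f$ is now continuous, $\mathcal A=\tilde{\mathcal A}$ and $\mathcal Z=\mathcal A\times I$, so Proposition \ref{zero} gives $v^\alpha\ge 0$ and $v^\alpha\equiv0$ on $\mathcal A\times I$; as for every $\xi$ there is $x_0\in\mathcal A$ with $v^\alpha(x_0,\xi)=0$, the equi-Lipschitz estimate $\|Dv^\alpha\|_{L^\infty(\Tt^d\times I)}\le C$ from \eqref{esti} yields a uniform bound $\|v^\alpha\|_{L^\infty(\Tt^d\times I)}\le C_0$ with $C_0$ independent of $\alpha$.

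The heart of the matter is equi-continuity in $\xi$. First I would freeze $\xi$ and read \eqref{D} as a single coercive discounted Hamilton--Jacobi equation for $v^\alpha(\cdot,\xi)$,
\begin{equation*}
(\alpha+\overline k(\xi))\,w+F(x,Dw,\xi)=g^\alpha(x,\xi),\qquad g^\alpha(x,\xi):=f(x,\xi)+\int_I k(\xi,\eta)v^\alpha(x,\eta)\,d\eta,
\end{equation*}
whose discount coefficient obeys $\alpha+\overline k(\xi)\ge k_0>0$ uniformly (since $|I|=1$). For $\xi_1,\xi_2\in I$, I would check that $w_2:=v^\alpha(\cdot,\xi_2)$ is a viscosity sub- and supersolution of the $\xi_1$-frozen equation up to an additive error: at any contact point the gradient $D\phi$ of a touching test function lies in $B(0,C)$ because $w_2$ is $C$-Lipschitz, so replacing the coefficients $(\overline k(\xi_2),F(\cdot,\cdot,\xi_2))$ by $(\overline k(\xi_1),F(\cdot,\cdot,\xi_1))$ costs at most $|\overline k(\xi_1)-\overline k(\xi_2)|\,C_0+\omega_F(|\xi_1-\xi_2|)$, where $\omega_F$ is the modulus of continuity of $F$ on the compact set $\Tt^d\times B(0,C)\times I$. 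Likewise $\|g^\alpha(\cdot,\xi_1)-g^\alpha(\cdot,\xi_2)\|_\infty\le\omega_f(|\xi_1-\xi_2|)+C_0\,\omega_k(|\xi_1-\xi_2|)$, all moduli being independent of $\alpha$ because $f,k$ are uniformly continuous on compacts and $\|v^\alpha\|_\infty\le C_0$. Applying the comparison principle for this single discounted equation (the one-component version of Proposition \ref{new-comparison}, where the $\Theta$ term is dropped), the prefactor $1/(\alpha+\overline k(\xi_1))\le 1/k_0$ turns the additive error into
\begin{equation*}
\|v^\alpha(\cdot,\xi_1)-v^\alpha(\cdot,\xi_2)\|_{L^\infty(\Tt^d)}\le\omega^*(|\xi_1-\xi_2|),
\end{equation*}
with $\omega^*$ a modulus independent of $\alpha$.

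Combining this with the equi-Lipschitz bound in $x$ gives the joint estimate $|v^\alpha(x_1,\xi_1)-v^\alpha(x_2,\xi_2)|\le C|x_1-x_2|+\omega^*(|\xi_1-\xi_2|)$; together with $\|v^\alpha\|_\infty\le C_0$ and the compactness of $\Tt^d\times I$ (here I use that $I$ is closed and finite), Arzel\`a--Ascoli makes $\{v^\alpha\}$ precompact in $C(\Tt^d\times I)$. Finally, Theorem \ref{result1} tells us that for each fixed $\xi$ the full family $v^\alpha(\cdot,\xi)$ converges uniformly on $\Tt^d$ to $v(\cdot,\xi)$; hence every uniform subsequential limit of $\{v^\alpha\}$ must equal $v$, so $v\in C(\Tt^d\times I)$, and precompactness forces the whole family to converge uniformly on $\Tt^d\times I$ to $v$, which solves \eqref{EP} with $c=0$ by Theorem \ref{result1}. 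I expect the main difficulty to be the rigorous frozen-$\xi$ comparison at the level of test functions --- verifying that $w_2$ is genuinely a perturbed sub/supersolution and that every error term is a modulus of $|\xi_1-\xi_2|$ uniform in $\alpha$; the structural feature that makes the whole scheme work is that the coupling furnishes a genuine discount $\overline k(\xi)\ge k_0>0$ surviving the limit $\alpha\to0$.
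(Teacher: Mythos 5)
Your proposal is correct, but it follows a genuinely different route from the paper's proof. The paper argues by half-relaxed limits taken \emph{jointly} in $(x,\xi)$: it defines $\overline v,\underline v$ as in \eqref{app2}--\eqref{app3}, proves a dedicated stability result (Proposition \ref{stability2}) showing these are a sub- and a supersolution of \eqref{EP} with $c=0$, uses Proposition \ref{zero} and \eqref{esti} to get $\overline v=\underline v=0$ on $\mathcal{A}\times I$, and then invokes the comparison Theorem \ref{SCR} to force $\overline v\le\underline v$; equality of an upper and a lower semicontinuous function gives joint continuity of the limit, and compactness of $\Tt^d\times I$ then upgrades the half-relaxed convergence to uniform convergence. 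You instead treat the slice-wise full-family convergence of Theorem \ref{result1} as a black box and add the one genuinely new ingredient your scheme needs: a uniform-in-$\alpha$ modulus of continuity in $\xi$, obtained by freezing $\xi$, reading \eqref{D} as a scalar discounted equation with effective discount $\alpha+\overline k(\xi)\ge k_0$, and running a perturbed comparison (the scalar analogue of Proposition \ref{new-comparison}). Your estimate is sound: at contact points the test-function gradients lie in $B(0,C)$ by \eqref{esti}, so the modulus of $F$ on the compact set $\Tt^d\times B(0,C)\times I$ applies; the linearity of the zeroth-order term lets you absorb the total error by the shift $\|E\|_\infty/(\alpha+\overline k(\xi_1))\le\|E\|_\infty/k_0$; and the uniform bound $\|v^\alpha\|_\infty\le C_0$ indeed follows from Proposition \ref{zero} plus \eqref{esti}, since $\mathcal{Z}=\mathcal{A}\times I\neq\emptyset$ in this continuous setting. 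Arzel\`a--Ascoli plus uniqueness of the slice-wise limits then gives joint uniform convergence of the whole family. What each approach buys: yours is quantitative --- it produces an explicit modulus $\omega^*$ in $\xi$, inherited by $v$, and isolates the structural fact that the coupling acts as a discount surviving $\alpha\to0$ --- while the paper's is softer and shorter given its existing machinery (no scalar comparison principle, no equicontinuity estimate; only stability plus Theorem \ref{SCR}). One point you should spell out when writing this up rigorously: the scalar comparison you invoke requires the frozen Hamiltonian $F(\cdot,\cdot,\xi_1)-g^\alpha(\cdot,\xi_1)$ to satisfy the analogues of \ref{coercive} and \ref{growth}; both hold (uniformly in $\alpha$) because $g^\alpha$ is bounded by $\|f\|_\infty+k_1C_0$ and Lipschitz in $x$ with constant of order $k_1C$, again by \eqref{esti}.
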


To prove the above, we consider other half-relaxed limits of $v^\alpha$. 
Instead of Proposition \ref{stability}, we give another result on the stability of viscosity solutions.

\begin{pro}\label{stability2}
Suppose that Assumptions \ref{coercive} and \ref{growth} hold.  Assume that $H\in C(\Tt^d \times \Rr^d \times I)$ and $k\in C(I\times I)$. Let $v^\alpha \in C(\Tt^d \times I)$ be the viscosity solution to \eqref{DP}. Additionally, suppose that there exist $c: I\to \Rr$ and $x_0 \in \Tt^d$ such that uniformly in $\xi\in I$,
\begin{equation}\label{app1}
\lim_{\alpha \to 0}\alpha v^\alpha(x_0,\xi)+\Theta v^\alpha(x_0,\xi)=-c(\xi).
\end{equation}
Let $\tilde v^\alpha (x,\xi):=v^\alpha(x,\xi)-v^\alpha(x_0,\xi)$ and set
\begin{equation}\label{app2}
\overline v(x,\xi)=\lim_{r\to 0} \sup\{\tilde v^\alpha(y,\eta)\:\:|\:\: |x-y|+|\xi-\eta|+\alpha \leq r\},
\end{equation}
and
\begin{equation}\label{app3}
\underline v(x,\xi)=\lim_{r\to 0} \inf\{\tilde v^\alpha(y,\eta)\:\:|\:\: |x-y|+|\xi-\eta|+\alpha \leq r\}.
\end{equation}
 Then, $\overline v$ and $\underline v$ are a subsolution and a supersolution, respectively,  to 
\begin{equation*}
H(x,Dv,\xi)+\int_I k(\xi,\eta)\{v(x,\xi)-v(x,\eta)\} \mbox{ }d\eta=c \quad\rm{in}\  \Tt^d\times I.
\end{equation*} 
\end{pro}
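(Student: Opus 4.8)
The plan is to run the proof of Proposition \ref{stability} essentially verbatim, the one genuinely new feature being that the half-relaxed limits \eqref{app2}--\eqref{app3} now also vary the parameter $\xi$, so the approximating points carry a \emph{moving} component $\eta_j \to \xi_0$ instead of a frozen $\xi$. I would prove that $\overline v$ is a subsolution; the statement for $\underline v$ is symmetric. Before starting I would record two preliminary facts that tame the $\xi$-dependence. By \eqref{esti} the family $\tilde v^\alpha$ is bounded on $\Tt^d \times I$ uniformly in $\alpha$. Moreover, since $v^\alpha \in C(\Tt^d \times I)$ and $k \in C(I \times I)$, each map $\xi \mapsto \alpha v^\alpha(x_0,\xi) + \Theta v^\alpha(x_0,\xi)$ is continuous; being the uniform limit in \eqref{app1} of such functions, $-c$ is continuous, so $c \in C(I)$.

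Next I fix $\xi_0 \in I$, $\phi \in C^1(\Tt^d)$ and a strict maximum point $\hat x$ of $\overline v(\cdot,\xi_0)-\phi$. By the definition of the upper half-relaxed limit I pick $y_j \to \hat x$, $\eta_j \to \xi_0$, $\alpha_j \to 0$ with $\tilde v^{\alpha_j}(y_j,\eta_j) \to \overline v(\hat x,\xi_0)$, and for each $j$ I let $x_j$ maximize $x \mapsto \tilde v^{\alpha_j}(x,\eta_j)-\phi(x)$ over $\Tt^d$. The very same four-term inequality chain as in Proposition \ref{stability}, using strict maximality of $\hat x$ and the upper semicontinuity built into $\overline v$, forces $x_j \to \hat x$ and $\tilde v^{\alpha_j}(x_j,\eta_j) \to \overline v(\hat x,\xi_0)$. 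Writing $\tilde v^\alpha = v^\alpha - v^\alpha(x_0,\cdot)$ and using $\Theta v^{\alpha_j}(x_j,\eta_j)=\Theta \tilde v^{\alpha_j}(x_j,\eta_j)+\Theta v^{\alpha_j}(x_0,\eta_j)$, the subsolution inequality for $v^{\alpha_j}(\cdot,\eta_j)$ at $x_j$ becomes
\[
\alpha_j \tilde v^{\alpha_j}(x_j,\eta_j) + H(x_j,D\phi(x_j),\eta_j) + \bigl[\alpha_j v^{\alpha_j}(x_0,\eta_j)+\Theta v^{\alpha_j}(x_0,\eta_j)\bigr] + \Theta \tilde v^{\alpha_j}(x_j,\eta_j) \leq 0 .
\]

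I then pass to the limit term by term. The term $\alpha_j \tilde v^{\alpha_j}(x_j,\eta_j) \to 0$ by boundedness; $H(x_j,D\phi(x_j),\eta_j) \to H(\hat x,D\phi(\hat x),\xi_0)$ by the assumed joint continuity $H \in C(\Tt^d\times\Rr^d\times I)$; and the bracket tends to $-c(\xi_0)$ because the \emph{uniformity} of \eqref{app1} gives $|\alpha_j v^{\alpha_j}(x_0,\eta_j)+\Theta v^{\alpha_j}(x_0,\eta_j)+c(\eta_j)| \to 0$ while $c(\eta_j)\to c(\xi_0)$ by continuity of $c$. For the coupling term I split $\Theta \tilde v^{\alpha_j}(x_j,\eta_j)=\overline{k}(\eta_j)\,\tilde v^{\alpha_j}(x_j,\eta_j)-\int_I k(\eta_j,\zeta)\,\tilde v^{\alpha_j}(x_j,\zeta)\,d\zeta$; the diagonal piece converges to $\overline{k}(\xi_0)\,\overline v(\hat x,\xi_0)$ since $\overline{k}(\eta_j)\to\overline{k}(\xi_0)$, and for the integral piece I invoke the reverse Fatou lemma (the integrand is dominated by a constant), using $k(\eta_j,\zeta)\to k(\xi_0,\zeta)>0$ together with $\limsup_j \tilde v^{\alpha_j}(x_j,\zeta) \leq \overline v(\hat x,\zeta)$, where the $\zeta$-slot is held fixed so only $x_j\to\hat x$ and $\alpha_j\to 0$ enter. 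This yields $\liminf_j \Theta \tilde v^{\alpha_j}(x_j,\eta_j) \geq \int_I k(\xi_0,\zeta)\bigl(\overline v(\hat x,\xi_0)-\overline v(\hat x,\zeta)\bigr)\,d\zeta$. Since the first three terms converge and their sum is $\leq -\Theta \tilde v^{\alpha_j}(x_j,\eta_j)$, assembling the limits gives the desired inequality
\[
H(\hat x,D\phi(\hat x),\xi_0) + \int_I k(\xi_0,\zeta)\bigl(\overline v(\hat x,\xi_0)-\overline v(\hat x,\zeta)\bigr)\,d\zeta \leq c(\xi_0).
\]

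The main obstacle is exactly this drift $\eta_j \to \xi_0$: in Proposition \ref{stability} the slice $\xi$ was frozen, so continuity in $x$ alone sufficed, whereas here the \emph{joint} half-relaxed limit (needed to secure $v\in C(\Tt^d\times I)$) forces $\eta_j$ to move. The three ingredients that rescue the argument — continuity of $H$ in $\xi$, continuity of $k$ (hence of $\overline{k}$ and of $c$, the latter as a uniform limit of continuous functions), and the uniformity of \eqref{app1} in $\xi$ — are precisely the extra hypotheses of this appendix absent from Proposition \ref{stability}. Once these are in place, the only new computation is the reverse-Fatou estimate on the off-diagonal coupling term with the varying kernel $k(\eta_j,\cdot)$, and it goes through because $k$ is bounded above and below and $\tilde v^\alpha$ is uniformly bounded.
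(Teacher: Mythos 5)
Your proof is correct and follows essentially the same route as the paper's: the same four-term inequality chain to localize $x_j$, the same rearranged subsolution inequality, and the same reverse-Fatou treatment of the coupling term with the moving slice $\eta_j\to\xi_0$. In fact you make explicit two points the paper leaves implicit --- the continuity of $c$ as a uniform limit of continuous functions, and the role of $k\geq k_0>0$ plus uniform boundedness of $\tilde v^\alpha$ in justifying the limit of the off-diagonal integral --- so nothing further is needed.
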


\begin{proof}
Here, we only show that $\overline v$ is a subsolution. Take $\xi \in I$, $\phi \in C^1(\Tt^d)$ and $\hat x \in \Tt^d$ such that $\overline v(\cdot,\xi)-\phi(\cdot)$ attains a strict maximum at $x_0\in \Tt^d$. Let $\{y_j\}_{j\in \Nn}$, $\{\xi_j\}_{j\in \Nn}$ and $\{\alpha_j\}_{j\in \Nn}$ such that $y_j \to \hat x$, $\xi_j \to \xi$, $\alpha_j \to 0$ and $\tilde v^{\alpha_j}(y_j,\xi_j) \to \overline v(\hat x,\xi)$.

On the other hand, let $x_j \in \Tt^d$ such that $\max_{x\in \Tt^d}\tilde v^{\alpha_j}(x,\xi_j)-\phi(x)= \tilde v^{\alpha_j}(x_j,\xi_j)-\phi(x_j)$. Moreover, we can choose a subsequence satisfying $x_j \to \overline x$, for some $\overline x \in \Tt^d$. Then, we get
\begin{align*}
\overline v(\hat x,\xi)-\phi(\hat x) 
&\leq \liminf_{j\to \infty}\tilde v^{\alpha_j}(y_j,\xi_j)-\phi(y_j) \leq \liminf_{j\to \infty}\tilde v^{\alpha_j}(x_j,\xi_j)-\phi(x_j)\\
&\leq \limsup_{j\to \infty}\tilde v^{\alpha_j}(x_j,\xi_j)-\phi(\overline x) \leq \overline v(\overline x,\xi)-\phi(\overline x).
\end{align*}
Thus, we have $\hat x=\overline x$ and $\lim_{j\to \infty} \tilde v^{\alpha_j}(x_j,\xi_j)=\overline v(\overline x,\xi)$.

Because $v^{\alpha_j}$ is a subsolution to \eqref{DP}, we have 
\[\alpha_j \tilde v^{\alpha_j}(x_j,\xi_j) +H(x_j,D\phi(x_j),\xi_j)+\alpha_j v^{\alpha_j}(x_0,\xi_j)+\Theta v^{\alpha_j}(x_0.\xi_j)\leq -\Theta \tilde v^{\alpha_j}(x_j,\xi_j). \]
In light of \eqref{esti}, $\tilde v^\alpha$ is bounded on $\Tt^d \times I$. 
Due to \eqref{app1}, we have
\begin{equation*}
 \alpha_j \tilde v^{\alpha_j}(x_j,\xi_j) +H(x_j,D\phi(x_j),\xi_j)+\alpha_j v^{\alpha_j}(x_0,\xi_j)+\Theta v^{\alpha_j}(x_0.\xi_j) \to H(\hat x,D\phi(\hat x),\xi)-c(\xi).
\end{equation*}
By Fatou's lemma, we obtain
\begin{align*}
\limsup_{j\to \infty} \int_I k(\xi_j,\eta) \tilde v^{\alpha_j} (x_j,\eta)\mbox{ }d\eta \leq \int_I \limsup_{j\to \infty} k(\xi_j,\eta) \tilde v^{\alpha_j}  (x_j,\eta)\mbox{ }d\eta \leq \int_I k(\xi,\eta) \overline v(\hat x,\eta) \mbox{ }d\eta.
\end{align*}
Hence, it follows that
\begin{equation*}
H(\hat x,D\phi(\hat x),\xi)+\int_I k(\xi,\eta)\{\overline v(\hat x,\xi)-\overline v(\hat x,\eta)\} \mbox{ }d\eta \leq c(\xi).
\end{equation*} 
\end{proof}

\begin{proof}[Proof of Theorem \ref{result2}.]
Note that $\mathcal{A}=\mathcal{\tilde A}$ and $\mathcal{Z}=\mathcal{A}\times I$ because $f\in C(\Tt^d\times I)$. Take $x_0 \in \mathcal{A}$. 
Define $\overline v$ and $\underline v$ as \eqref{app2} and \eqref{app3}, respectively.  In light of Proposition \ref{zero}, for all $\xi \in I$ and $\alpha>0$, we get
\begin{equation*}
\alpha v^\alpha(x_0,\xi)+\Theta v^\alpha(x_0,\xi)=0,
\end{equation*}  which tells us that \eqref{app1} holds with $c(\xi)\equiv0$. By Proposition \ref{stability2}, $\overline v$ and $\underline v$ are a subsolution and a supersolution to \eqref{E} with $c=0$, respectively.

By the definition, we see $\overline v \geq \underline v$ in $\Tt^d\times I$.
On the other hand, Proposition \ref{zero} and \eqref{esti} imply $\overline v = \underline v=0$ in $\mathcal{A}\times I$. Then, Theorem \ref{SCR} guarantees $\overline v \leq \underline v$ in $\Tt^d\times I$. Hence, $\overline v = \underline v=:v$ in $\Tt^d\times I$. 
Because $\overline v$ and $-\underline v$ are upper semi-continuous in $\Tt^d \times I$, $v$ is continuous on $\Tt^d\times I$.
Noting that $\Tt^d\times I$ is compact, by the basic property of half-relaxed limits, we can see $v^\alpha \to v$ uniformly on $\Tt^d \times I$ as $\alpha \to 0$. 
\end{proof}

\vspace{2mm}
{\bf Acknowledgement.} The author would like to thank Professor Hiroyoshi Mitake for his helpful comments and suggestions.

\end{document}